\tikzset{
	symbol/.style={
		draw=none,
		every to/.append style={
			edge node={node [sloped, allow upside down, auto=false]{$#1$}}}
	}
}
\begin{document}
	\pdfrender{StrokeColor=black,TextRenderingMode=2,LineWidth=0.2pt}	
	
	\title{On defectless unibranched simple extensions, complete distinguished chains and certain stability results}
	
	\author{Arpan Dutta}
	\email{arpandutta@iitbbs.ac.in}
	\address{Department of Mathematics, School of Basic Sciences, IIT Bhubaneswar, Argul,
		Odisha, India, 752050.}

	\author{Rumi Ghosh}
	\email{s23ma09008@iitbbs.ac.in}
	\address{Department of Mathematics, School of Basic Sciences, IIT Bhubaneswar, Argul,
		Odisha, India, 752050.}

	\def\NZQ{\mathbb}               
	\def\NN{{\NZQ N}}
	\def\QQ{{\NZQ Q}}
	\def\ZZ{{\NZQ Z}}
	\def\RR{{\NZQ R}}
	\def\CC{{\NZQ C}}
	\def\AA{{\NZQ A}}
	\def\BB{{\NZQ B}}
	\def\PP{{\NZQ P}}
	\def\FF{{\NZQ F}}
	\def\GG{{\NZQ G}}
	\def\HH{{\NZQ H}}
	\def\UU{{\NZQ U}}
	\def\P{\mathcal P}
	
	%
	%
	\let\union=\cup
	\let\sect=\cap
	\let\dirsum=\oplus
	\let\tensor=\otimes
	\let\iso=\cong
	\let\Union=\bigcup
	\let\Sect=\bigcap
	\let\Dirsum=\bigoplus
	\let\Tensor=\bigotimes
	
	\theoremstyle{plain}
	\newtheorem{Theorem}{Theorem}[section]
	\newtheorem{Lemma}[Theorem]{Lemma}
	\newtheorem{Corollary}[Theorem]{Corollary}
	\newtheorem{Proposition}[Theorem]{Proposition}
	\newtheorem{Problem}[Theorem]{}
	\newtheorem{Conjecture}[Theorem]{Conjecture}
	\newtheorem{Question}[Theorem]{Question}
	
	\theoremstyle{definition}
	\newtheorem{Example}[Theorem]{Example}
	\newtheorem{Examples}[Theorem]{Examples}
	\newtheorem{Definition}[Theorem]{Definition}
	
	\theoremstyle{remark}
	\newtheorem{Remark}[Theorem]{Remark}
	\newtheorem{Remarks}[Theorem]{Remarks}

	\newcommand{\n}{\par\noindent}
	\newcommand{\nn}{\par\vskip2pt\noindent}
	\newcommand{\sn}{\par\smallskip\noindent}
	\newcommand{\mn}{\par\medskip\noindent}
	\newcommand{\bn}{\par\bigskip\noindent}
	\newcommand{\pars}{\par\smallskip}
	\newcommand{\parm}{\par\medskip}
	\newcommand{\parb}{\par\bigskip}

	\let\epsilon=\varepsilon
	\let\phi=\varphi
	\let\kappa=\varkappa
	
	\newcommand{\trdeg}{\mbox{\rm trdeg}\,}
	\newcommand{\rr}{\mbox{\rm rat rk}\,}
	\newcommand{\sep}{\mathrm{sep}}
	\newcommand{\ac}{\mathrm{ac}}
	\newcommand{\ins}{\mathrm{ins}}
	\newcommand{\res}{\mathrm{res}}
	\newcommand{\Gal}{\mathrm{Gal}\,}
	\newcommand{\ch}{\operatorname{char}}
	\newcommand{\Aut}{\mathrm{Aut}\,}
	\newcommand{\kras}{\mathrm{kras}\,}
	\newcommand{\dist}{\mathrm{dist}\,}
	\newcommand{\ord}{\mathrm{ord}\,}
	\newcommand{\Div}{\mathrm{Div}\,}
	\newcommand{\Supp}{\mathrm{Supp}\,}
	\newcommand{\Spec}{\mathrm{Spec}\,}
	\newcommand{\height}{\mathrm{ht}\,}
	\newcommand{\rk}{\mathrm{rk}\,}
	\newcommand{\Diff}{\mathrm{Diff}\,}
	\newcommand{\Ram}{\mathrm{Ram}\,}
	\newcommand{\id}{\mathrm{id}\,}
	\newcommand{\lex}{\mathrm{lex}\,}
	\newcommand{\gr}{\mathrm{gr}\,}
	\newcommand{\init}{\mathrm{in}\,}

	\let\phi=\varphi
	\let\kappa=\varkappa
	
	\def \a {\alpha}
	\def \b {\beta}
	\def \s {\sigma}
	\def \d {\delta}
	\def \g {\gamma}
	\def \o {\omega}
	\def \l {\lambda}
	\def \th {\theta}
	\def \D {\Delta}
	\def \G {\Gamma}
	\def \O {\Omega}
	\def \L {\Lambda}
	%
	%
	\textwidth=15cm \textheight=22cm \topmargin=0.5cm
	\oddsidemargin=0.5cm \evensidemargin=0.5cm \pagestyle{plain}


	
	\date{\today}
	
	\maketitle
	

\begin{abstract}
	Let $(K,v)$ be a valued field. Take an extension of $v$ to a fixed algebraic closure $\overline{K}$ of $K$. In this paper we show that an element $a\in \overline{K}$ admits a complete distinguished chain over $K$ if and only if the extension $(K(a)|K,v)$ is unibranched and defectless. This characterization generalizes the known result in the henselian case. In particular, our result shows that if $a$ admits a complete distinguished chain over $K$, then it also admits one over the henselization $K^h$; however the converse may not be true. The main tool employed in our analysis is the stability of the $j$-invariant associated to a valuation transcendental extension under passage to the henselization.
	
	\pars We also explore the stability of defectless simple extensions in the following sense: let $(K(X)|K,w)$ be a valuation transcendental extension with a pair of definition $(b,\g)$. Assume that either $(K(b)|K,v)$ is a defectless extension, or that $f(X)$ is a key polynomial for $w$ over $K$, where $f(X)$ is the minimal polynomial of $b$ over $K$. We show that then the extension $(K(b,X)|K(X),w)$ is defectless. In particular, the extension $(K(b,X)|K(X),w)$ is always defectless whenever $(b,\g)$ is a minimal pair of definition for $w$ over $K$.  
\end{abstract}


\section{Introduction} Let $K$ be a field equipped with a Krull valuation $v$. The value group of $(K,v)$ will be denoted by $vK$ and the residue field by $Kv$. The value of an element $c\in K$ will be denoted by $vc$ and its residue by $cv$. Fix an algebraic closure $\overline{K}$ of $K$ and take an extension of $v$ to $\overline{K}$ which we again denote by $v$. Then for any algebraic extension $L|K$ we can talk of the \textbf{henselization} $L^h$ without any ambiguity.

\pars The notion of \textit{defect} is of singular importance in valuation theory. It plays a crucial role in some of the most fundamental problems arising in algebraic geometry and model theory. For example, it is known that the defect is the primary obstruction to achieving Local Uniformization in positive characteristic (cf. [\ref{Kuh vln model}]). We refer the reader to [\ref{Kuh defect}] for an extensive treatment of the defect. A finite extension $(L|K,v)$ is said to be \textbf{defectless} if there is equality in the Fundamental Inequality, which is equivalent to the extension $(L^h|K^h,v)$ being defectless. By the Lemma of Ostrowski, we then have the following relation for defectless extensions: 
\[  [L^h:K^h] = (vL:vK)[Lv:Kv]. \]

\pars A characterization of defectless simple extensions over \textit{henselian} valued fields has been provided in [\ref{KA SKK chains associated with elts henselian}] via the notion of complete distinguished chains. A pair $(b,a) \in \overline{K}\times\overline{K}$ is said to form a \textbf{distinguished pair over $K$} if the following conditions are satisfied:
\sn (DP1) $[K(b):K] > [K(a):K]$,
\n (DP2) $[K(z):K] < [K(b):K] \Longrightarrow v(b-a)\geq v(b-z)$, 
\n (DP3) $v(b-a) = v(b-z)\Longrightarrow [K(z):K] \geq [K(a):K]$. \\
In other words, $a$ is closest to $b$ among all the elements $z$ satisfying $[K(z):K] < [K(b):K]$; furthermore, $a$ has minimum degree among all such elements which are closest to $b$. In this case we define $\d(b,K):= v(b-a)$. Equivalently,
\[ \d(b,K):= \max \{ v(b-z) \mid [K(z):K] < [K(b):K] \}.  \]
An element $a\in \overline{K}$ is said to admit a \textbf{complete distinguished chain over $K$} if there is a chain $a_0(= a), a_1, \dotsc , a_n$ of elements in $\overline{K}$ such that $(a_i, a_{i+1})$ is a distinguished pair over $K$ for all $i$, and $a_n \in K$. In the setup of henselian valued fields, the existence of complete distinguished chains is provided by [\ref{KA SKK chains associated with elts henselian}, Theorem 1.2]:
\begin{Theorem}\label{Thm cdc Khanduja-Aghigh}
	Let $(K,v)$ be a henselian valued field. Then an element $a\in \overline{K}$ admits a complete distinguished chain over $K$ if and only if $(K(a)|K,v)$ is a defectless extension. 
\end{Theorem}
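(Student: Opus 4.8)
The plan is to isolate a \emph{one-step lemma} about distinguished pairs over a henselian base and to deduce both implications from it together with the multiplicativity of the defect in finite towers. The one-step lemma I would use is: if $(K,v)$ is henselian and $(b,c)$ is a distinguished pair over $K$, then $K(c)\subseteq K(b)$ and the extension $(K(b)|K(c),v)$ is defectless. Granting this, the forward implication is immediate: given a complete distinguished chain $a=a_0,a_1,\dots,a_n\in K$, applying the one-step lemma to each consecutive pair $(a_i,a_{i+1})$ produces a tower of fields $K=K(a_n)\subseteq K(a_{n-1})\subseteq\dots\subseteq K(a_0)=K(a)$ in which every step is defectless; since the defect is multiplicative in finite towers, $(K(a)|K,v)$ is defectless.

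For the backward implication I would argue by induction on $d:=[K(a):K]$. If $d=1$ then $a\in K$ and the one-term chain does the job. Assume $d>1$ and that $(K(a)|K,v)$ is defectless. The first point to check is that the supremum $\delta(a,K):=\sup\{v(a-z)\mid [K(z):K]<d\}$ is actually attained; this uses defectlessness of $K(a)|K$ together with henselianity — equivalently, $a$ is not the limit over $K$ of a pseudo-Cauchy sequence of lower-degree elements whose values are cofinal below $\delta(a,K)$ — and one may also bound $\delta(a,K)$ from above by the Krasner constant of $a$ over $K$. Pick $a_1$ of minimal degree with $v(a-a_1)=\delta(a,K)$; then $(a,a_1)$ is a distinguished pair over $K$ directly from the definition of $\delta(a,K)$ and the minimality of $\deg a_1$. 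The one-step lemma gives $K(a_1)\subseteq K(a)$ and that $K(a)|K(a_1)$ is defectless; writing the defect of $K(a)|K$ as the product of the defects of $K(a)|K(a_1)$ and $K(a_1)|K$, and using that defects are positive integers, we conclude $(K(a_1)|K,v)$ is defectless. Since $[K(a_1):K]<d$ by (DP1), the induction hypothesis furnishes a complete distinguished chain $a_1,\dots,a_m\in K$ for $a_1$; prepending $a_0=a$ yields one for $a$.

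The substance of the argument, and the step I expect to be the main obstacle, is the one-step lemma. For the containment $K(c)\subseteq K(b)$ I would apply Krasner's lemma inside the henselian field $K(b)$: it suffices to show that $v(c-\sigma c)<v(b-c)$ for every $K$-conjugate $\sigma c\ne c$ of $c$. Since $\deg\sigma c=\deg c<[K(b):K]$, (DP2) gives $v(b-\sigma c)\le v(b-c)$, and when this inequality is strict the desired estimate follows immediately from the ultrametric inequality. The delicate case is the equality $v(b-\sigma c)=v(b-c)$ permitted by (DP3): one must then exclude $v(c-\sigma c)=v(b-c)$, and this is the technical heart of the matter — it requires the minimality of $\deg c$ in (DP3) together with a close analysis of the value-group and residue-field data of the would-be step, and it must be handled with care in the wild (residue-characteristic) case, where the natural tool is the pair-of-definition valuation $w_{c,\delta}$ on $K[X]$ with $\delta=v(b-c)$ and the interplay between $w_{c,\delta}$, key polynomials, and the minimal polynomials of $b$ and $c$. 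Once $K(c)\subseteq K(b)$ is established, the defectlessness of $K(b)|K(c)$ is comparatively routine: the single parameter $\delta=v(b-c)$ controls both the value-group index $(vK(b):vK(c))$ and the residue degree $[K(b)v:K(c)v]$, and one checks via the Fundamental Inequality and Ostrowski's lemma that their product already accounts for all of $[K(b):K(c)]$, leaving no defect. I expect the wild DP3-tie case of the containment to be where the real difficulty lies; the reductions, the induction, and the numerical bookkeeping are routine by comparison.
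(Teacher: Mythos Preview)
A preliminary remark: the paper does not prove Theorem~\ref{Thm cdc Khanduja-Aghigh}. It is quoted from Aghigh--Khanduja [\ref{KA SKK chains associated with elts henselian}, Theorem~1.2] and used as a black box in the proofs of Lemma~\ref{Lemma dist pair henselization} and Theorem~\ref{Thm central}. So there is no in-paper proof to compare against; what follows compares your plan with the approach in [\ref{KA SKK chains associated with elts henselian}] and with the machinery the present paper develops for the analogous step.

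Your plan rests entirely on the one-step lemma, and specifically on the field containment $K(c)\subseteq K(b)$ for a distinguished pair $(b,c)$, which you try to obtain via Krasner. You correctly isolate the DP3-tie case $v(b-\sigma c)=v(b-c)$ as the obstruction and then leave it open, hoping it will yield to ``the minimality of $\deg c$ in (DP3) together with a close analysis''. It does not yield along this route. When $\ch Kv=p>0$ and the ball $S=\{\sigma c:\ v(c-\sigma c)\geq\gamma\}$ has $p\mid|S|$, the trace-type constructions that would produce an element of degree $<\deg c$ inside that ball (and hence contradict (DP3)) all collapse, so (DP3) gives no leverage. More to the point, neither [\ref{KA SKK chains associated with elts henselian}] nor the present paper ever asserts $K(c)\subseteq K(b)$: the second member of a distinguished pair is far from unique (any $c'$ with $v(c-c')\geq\gamma$ and $[K(c'):K]=[K(c):K]$ also works), and there is no reason the field it generates should sit inside $K(b)$.

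What the literature actually does --- and what the present paper packages in Lemma~\ref{Lemma dist pair henselization} and Proposition~\ref{Prop d(K(a)|K) = d(K(b)|K)} --- bypasses the containment altogether. From the minimal-pair property of $(c,\gamma)$ one obtains only the \emph{graded} inclusions $vK(c)\subseteq vK(b)$ and $K(c)v\subseteq K(b)v$ (cf.\ [\ref{Dutta imp const fields key pols valn alg extns}, Lemma~3.2]), and then proves the numerical identity
\[
\frac{[K(b):K]}{[K(c):K]}=\bigl(vK(b):vK(c)\bigr)\,[K(b)v:K(c)v]
\]
directly, via the $Q$-expansion of the minimal polynomial of $b$ and the key-polynomial formalism. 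This gives $d(K(b)|K,v)=d(K(c)|K,v)$ at once, with no tower $K\subseteq K(c)\subseteq K(b)$ and hence no appeal to multiplicativity of defect in a tower. In other words, the step you label ``comparatively routine'' is the entire argument, while the containment you are fighting for is neither established nor needed. A second, smaller gap: in the backward direction you assert that $\delta(a,K)$ is attained when $(K(a)|K,v)$ is defectless but give no argument; this is a genuine point (non-attainment is exactly how defect manifests) and in [\ref{KA SKK chains associated with elts henselian}] it is again handled through the minimal-pair machinery rather than by a pseudo-Cauchy limiting argument.
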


\pars Our primary goal in this present paper is to give a characterization of \textit{unibranched} defectless simple extensions over \textit{arbitrary} valued fields. A finite extension $(L|K,v)$ is said to be \textbf{unibranched} if there is a unique extension of $v$ from $K$ to $L$. By [\ref{Kuh max imm extns of valued fields}, Lemma 2.1], this is equivalent to $L$ being linearly disjoint to $K^h$ over $K$. Our central result is the following:
\begin{Theorem}\label{Thm central}
	An element $a\in\overline{K}$ admits a complete distinguished chain over $K$ if and only if $(K(a)|K,v)$ is a defectless and unibranched extension. 
\end{Theorem}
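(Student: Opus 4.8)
The plan is to reduce the statement to the henselian case, Theorem~\ref{Thm cdc Khanduja-Aghigh}, by passing to the henselization $K^h$; the heart of the matter is to control how the degrees $[K(z):K]$ and the constants $\d(z,K)$ behave under $K\to K^h$. I will freely use that $(L|K,v)$ is defectless if and only if $(L^h|K^h,v)$ is, that defect is multiplicative in towers (so a subextension of a defectless extension is defectless), that $K$ is dense in $K^h$, and that $(K(z)|K,v)$ is unibranched if and only if $[K^h(z):K^h]=[K(z):K]$, in which case $K(z)^h=K^h(z)$. The technical engine is an approximation statement: if $z\in\overline K$ has $[K^h(z):K^h]=m$, then for every $\g_0\in vK$ there is $z'\in\overline K$ with $[K(z'):K]\le m$ and $v(z-z')\ge\g_0$ (approximate the monic minimal polynomial of $z$ over $K^h$ by a monic polynomial over $K$ of the same degree and use continuity of roots); if moreover $\g_0$ is taken above the Krasner constant of $z$ over $K^h$, this forces $K^h(z)=K^h(z')$ once the degrees agree. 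I expect this to follow from, indeed to be essentially equivalent to, the stability of the $j$-invariant of $w_{z,\g}$ under passage to the henselization, and it yields two facts I shall lean on: (i) if $\d(z,K)$ is attained as a maximum, then $(K(z)|K,v)$ is unibranched (otherwise one approximates $z$ by lower-degree elements over $K$ that beat $\d(z,K)$); and (ii) if $(K(z)|K,v)$ is unibranched, then $\d(z,K)=\d(z,K^h)$ and the minimal degree of a $\d$-realizer is the same computed over $K$ or over $K^h$.

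For the forward implication, assume $a$ admits a complete distinguished chain $a=a_0,a_1,\dots,a_n\in K$ over $K$. Each tail $a_i,\dots,a_n$ is a complete distinguished chain for $a_i$, so by (i) every $(K(a_i)|K,v)$ is unibranched; in particular so is $(K(a)|K,v)$. Using (ii) at each level together with unibranchedness, I would check directly that conditions (DP1)--(DP3) for the pair $(a_i,a_{i+1})$ over $K$ transfer verbatim to $K^h$, so that $a_0,\dots,a_n$ is a complete distinguished chain over $K^h$; Theorem~\ref{Thm cdc Khanduja-Aghigh} then gives that $(K^h(a)|K^h,v)$ is defectless, whence $(K(a)|K,v)$ is defectless and unibranched.

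For the reverse implication, assume $(K(a)|K,v)$ is defectless and unibranched. Then $(K^h(a)|K^h,v)$ is defectless, so Theorem~\ref{Thm cdc Khanduja-Aghigh} furnishes a complete distinguished chain $a=b_0,b_1,\dots,b_k\in K^h$ over $K^h$, and $(K^h(b_1)|K^h,v)$ is defectless (being the top of such a chain). I argue by induction on $[K(a):K]$, the case $[K(a):K]=1$ being trivial. Approximating $b_1$ very closely by $a'\in\overline K$ as above, $[K(a'):K]\le[K^h(b_1):K^h]$ while the closeness forces $K^h(a')=K^h(b_1)$; hence $(K(a')|K,v)$ is unibranched with $(K^h(a')|K^h,v)=(K^h(b_1)|K^h,v)$ defectless, so $(K(a')|K,v)$ is defectless. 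Moreover $v(a-a')=v(a-b_1)=\d(a,K^h)=\d(a,K)$ by (ii), and $[K(a'):K]$ equals the common minimal $\d$-realizer degree, so $(a,a')$ is a distinguished pair over $K$. By the induction hypothesis applied to $a'$ (of strictly smaller degree), $a'$ admits a complete distinguished chain over $K$, and prepending $a$ yields one for $a$.

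The main obstacle is that degrees over $K$ can a priori collapse dramatically over $K^h$, so that a best approximation — or even a whole distinguished chain — existing over $K^h$ need not descend to $K$; ruling this out under the unibranched hypothesis is exactly where the stability of the $j$-invariant is indispensable. I expect the delicate step to be the verification that the maxima defining the various $\d(b_i,\cdot)$ are attained over $K$ precisely when they are over $K^h$, and with the same realizing degree, especially if one must accommodate inseparability or valuations of higher rank, where the ``continuity of roots'' underlying the approximation statement requires care.
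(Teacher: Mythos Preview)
Your overall strategy --- reduce to Theorem~\ref{Thm cdc Khanduja-Aghigh} over $K^h$ and induct on degree --- is exactly the paper's. The gap is in your ``technical engine''. Your approximation statement specialised to $z\in K^h\setminus K$ (so $m=1$) asserts precisely that $K$ is dense in $K^h$, and this fails once the value group has rank greater than one. For instance, take $K=k(x,y)$ with the rank-$2$ valuation obtained by composing the $y$-adic place with the $x$-adic place on the residue field $k(x)$; then $\a=\sqrt{1+x}\in K^h$, but no $c\in K$ satisfies $v(\a-c)\geq(1,0)$, since that would force the residue of $c$ under the $y$-adic coarsening to equal $\a\notin k(x)$. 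So ``approximate the monic minimal polynomial over $K^h$ by one over $K$ and use continuity of roots'' simply does not go through in general, and your own closing caveat flags exactly this. Consequently your derivations of (i) and (ii), and your reverse-direction construction of $a'$ by Krasner-type approximation of $b_1$, all break down outside rank one.

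The paper circumvents density entirely. Instead of approximating $b_1$, it takes a \emph{minimal pair} $(b,\g)$ for $w=v_{a,\g}$ over $K$ (with $\g=\d(a,K^h)$) and invokes Corollary~\ref{Coro min pair henselization} --- a minimal pair over $K$ remains minimal over $K^h$ --- to compare degrees over the two base fields; this yields only $[K(b):K]<[K(a):K]$, not the stronger $[K(b):K]\leq[K^h(b_1):K^h]$ you aim for, but that already suffices for $(a,b)$ to be a distinguished pair over $K$. The transfer of ``unibranched'' and ``defectless'' along a distinguished pair (your (i) and the propagation you need for induction) is then handled by Lemma~\ref{Lemma dist pair henselization}, whose proof rests on the $j$-invariant identity $\deg f/\deg Q=\deg f^h/\deg Q^h$ coming from Proposition~\ref{Prop j(f) = j(f^h)} and Lemma~\ref{Lemma deg f/ deg Q = j(f)/j(Q)}, not on any approximation. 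If you replace your density-based engine with these two results, your outline becomes the paper's proof; the point is that the $j$-invariant stability gives the degree comparison you need \emph{without} producing a close element of small $K$-degree.
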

This extends Theorem \ref{Thm cdc Khanduja-Aghigh} to the setup of arbitrary valued fields. As a consequence of Theorem \ref{Thm central}, we observe that if $a$ admits a complete distinguished chain over $K$, then it also admits one over $K^h$. However, the converse is not true, since there can be defectless extensions which are not unibranched. A concrete example illustrating this fact is provided in Example \ref{Example}.

\pars The primary objects employed in our analysis of complete distinguished chains are the concepts of (minimal) pairs of definition and an invariant associated to valuation transcendental extensions. An extension $w$ of $v$ from $K$ to a rational function field $K(X)$ is said to be \textbf{valuation transcendental} if we have equality in the Abhyankar inequality, that is, 
\[ \dim_\QQ (\QQ\tensor_{\ZZ} wK(X)/vK) + \trdeg[K(X)w:Kv] = 1.   \]
We take an extension of $w$ to $\overline{K}(X)$ and denote it again by $w$. It is well-known (c.f. [\ref{AP sur une classe}], [\ref{APZ characterization of residual trans extns}], [\ref{APZ2 minimal pairs}], [\ref{Dutta min fields implicit const fields}], [\ref{Kuh value groups residue fields rational fn fields}]) that such an extension $w$ is completely characterized by a pair $(a,\g)\in \overline{K}\times w\overline{K}(X)$ in the following sense:
\[ w(X-z) = \min\{v(a-z),\g\} \text{ for all } z\in\overline{K}.  \]
We then say $(a,\g)$ is a \textbf{pair of definition for $w$} and write $w= v_{a,\g}$. A pair of definition for $w$ may not be unique. It has been observed in [\ref{AP sur une classe}, Proposition 3] that
\[ (b,\g^\prime) \text{ is also a pair of definition for $w$ if and only if } v(a-b)\geq \g=\g^\prime.  \] 
We say that $(a,\g)$ is a \textbf{minimal pair of definition for $w$ over $K$} if it has minimal degree over $K$ among all pairs of definition, that is, 
\[  v(a-b)\geq \g \Longrightarrow [K(b):K]\geq [K(a):K]. \]
The connection between minimal pairs of definition and distinguished pairs is captured by the following observation which is immediate and hence its proof is omitted:
\begin{Proposition}\label{Prop dist pair min pair}
	Assume that $(b,a)$ is a distinguished pair over $K$ and set $\g = \d(b,K)$. Then $(a,\g)$ is a minimal pair of definition for $v_{b,\g}$ over $K$. Moreover, if $\g^\prime$ is an element in an ordered abelian group containing $v\overline{K}$ such that $\g^\prime>\g$, then $(b,\g^\prime)$ is a minimal pair of definition for $v_{b,\g^\prime}$ over $K$.  
\end{Proposition}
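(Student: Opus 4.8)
The plan is to read everything off the defining axioms (DP1)--(DP3) together with the quoted characterization of when two pairs yield the same valuation; no extra machinery is needed.

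First I would record that $(a,\g)$ really is a pair of definition for $v_{b,\g}$. Since $\g=\d(b,K)=v(b-a)$, we certainly have $v(b-a)\geq\g$, so the criterion of [\ref{AP sur une classe}, Proposition 3] (applied with $b$ playing the role of the base point) gives $v_{a,\g}=v_{b,\g}$; hence $(a,\g)$ defines $v_{b,\g}$, and it remains only to verify minimality.

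For minimality of $(a,\g)$, take any $c\in\overline{K}$ with $v(a-c)\geq\g$; the goal is $[K(c):K]\geq[K(a):K]$. The ultrametric inequality together with $v(b-a)=\g\leq v(a-c)$ gives $v(b-c)\geq\g=v(b-a)$. If $[K(c):K]\geq[K(b):K]$, then (DP1) already yields $[K(c):K]>[K(a):K]$. Otherwise $[K(c):K]<[K(b):K]$, and then (DP2) forces $v(b-c)\leq v(b-a)$, so in fact $v(b-c)=v(b-a)$, whence (DP3) gives $[K(c):K]\geq[K(a):K]$. Either way we are done.

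For the ``moreover'' statement, note first that $(b,\g^\prime)$ is a pair of definition for $v_{b,\g^\prime}$ by construction, since $\g^\prime=v_{b,\g^\prime}(X-b)$. For minimality, suppose $c\in\overline{K}$ satisfies $v(b-c)\geq\g^\prime$; if we had $[K(c):K]<[K(b):K]$, then (DP2) would give $v(b-a)\geq v(b-c)\geq\g^\prime>\g=v(b-a)$, which is absurd. Hence $[K(c):K]\geq[K(b):K]$, as required. I do not anticipate any genuine obstacle: the entire argument is a direct unwinding of the definitions, which is exactly why the paper flags the statement as immediate; the only thing to be mildly careful about is keeping track of which of $a$ and $b$ serves as the ``center'' in each of the three axioms.
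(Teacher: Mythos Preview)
Your proof is correct and proceeds exactly by the direct unwinding of (DP1)--(DP3) together with the pair-of-definition criterion that the paper has in mind; the paper itself omits the proof, calling the statement ``immediate,'' and your argument supplies precisely that immediate verification.
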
 
To a valuation transcendental extension $w$, we associate an invariant called the \textbf{$j$-invariant} in the following way. Take a pair of definition $(b,\g)$ of $w$. For any polynomial $f(X) = (X-z_1)\dotsc (X-z_n)\in \overline{K}[X]$, we define 
\[ j_w(f):= | \{z_i \mid v(b-z_i)\geq \g\} |,   \] 
where $|\O|$ denotes the cardinality of a set $\O$. When the valuation $w$ is tacitly understood, we will drop the suffix and simply write it as $j(f)$. The invariance of this $j$-invariant is established in [\ref{Dutta invariant of valn tr extns and connection with key pols}, Theorem 3.1]. Moreover, it has been observed in [\ref{Dutta invariant of valn tr extns and connection with key pols}, Theorem 3.3 and Proposition 3.5] that over $K[X]$, the $j$-invariant is completely characterized by $j(Q)$ whenever $Q(X)$ is the minimal polynomial of $a$ over $K$ for some \textit{minimal} pair of definition $(a,\g)$ of $w$ over $K$. In this present paper, we establish the \textit{stability} of the $j$-invariant when we pass to the henselization in Proposition \ref{Prop j(f) = j(f^h)}. We observe that given a pair of definition $(b,\g)$ of $w$ and the minimal polynomials $f(X)$ and $f^h(X)$ of $b$ over $K$ and $K^h$ respectively, then
\[  j(f) = j(f^h), \]
provided that $f$ is a key polynomial for $w$ over $K$. We direct the reader to Section \ref{Sec 2} for the definition of key polynomials. A necessary condition for an irreducible polynomial $Q(X)$ to be a key polynomial is that $j(Q)>0$ (cf. [\ref{Dutta invariant of valn tr extns and connection with key pols}, Section 4]). Thus a key polynomial over $K$ must be the minimal polynomial for some pair of definition over $K$. In particular, the conditions of Proposition \ref{Prop j(f) = j(f^h)} are always satisfied when we choose a \textit{minimal} pair of definition (cf. Proposition \ref{Prop j(f) = j(f^h) = [K(b)^h:IC_K]}). 

\pars The stability of the $j$-invariant plays a crucial role in the proof of the following lemma which shows that distinguished pairs over $K$ remain so over $K^h$. In particular, it is a key cog in the proof of Theorem \ref{Thm central}.

\begin{Lemma}\label{Lemma dist pair henselization}
	Let $(b,a)$ be a distinguished pair over $K$. Then $(b,a)$ is also a distinguished pair over $K^h$. Furthermore, the extension $(K(b)|K,v)$ is defectless (unibranched) if and only if the extension $(K(a)|K,v)$ is also defectless (unibranched).
\end{Lemma}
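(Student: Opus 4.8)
The plan is to establish the two assertions in sequence, using the stability of the $j$-invariant (Proposition \ref{Prop j(f) = j(f^h)}) as the central tool. First I would show that $(b,a)$ remains a distinguished pair over $K^h$. Set $\g = \d(b,K)$. By Proposition \ref{Prop dist pair min pair}, $(a,\g)$ is a minimal pair of definition for $w := v_{b,\g}$ over $K$, so in particular $Q(X)$, the minimal polynomial of $a$ over $K$, is a key polynomial for $w$ over $K$. Applying Proposition \ref{Prop j(f) = j(f^h)} to both $a$ and $b$, we get $j(Q) = j(Q^h)$ and $j(f) = j(f^h)$, where $Q^h, f^h$ are the minimal polynomials of $a, b$ over $K^h$ and $f$ is the minimal polynomial of $b$ over $K$. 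The first key point is that $[K(a):K] < [K(b):K]$ forces $a$ to have degree less than $b$ also over $K^h$: indeed, since $j(Q^h) = j(Q) > 0$, $a$ is a root of a factor $Q^h$ of $Q$ with $v(b - a')\ge \g$ for at least one root $a'$ of $Q^h$ — but the defining property of the minimal pair over $K$ translates (via the stability of $j$ and a Galois/conjugacy argument over $K^h$) into $a$ being the right candidate over $K^h$. The cleanest route: show $\d(b,K) = \d(b,K^h)$, which combined with the degree inequality $[K(a):K^h] \le [K(a):K] < [K(b):K]$ and minimality of $\deg_{K^h} a$ among closest elements yields (DP1)--(DP3) over $K^h$. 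The equality $\d(b,K) = \d(b,K^h)$ is where I expect the main work: the inequality $\d(b,K)\le \d(b,K^h)$ is immediate since every $z$ competing over $K$ competes over $K^h$, while the reverse requires that no element $z\in\overline{K}$ with $[K^h(z):K^h] < [K^h(b):K^h]$ can be strictly closer to $b$ than $a$ is — and this is precisely controlled by $j(f) = j(f^h)$, since $j(f^h)$ counts the roots of $f^h$ (conjugates of $b$ over $K^h$) lying in the ball of radius $\g$ around $b$.

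For the second assertion, once $(b,a)$ is known to be a distinguished pair over $K^h$, I would argue that defectlessness of $(K(b)|K,v)$ is equivalent to defectlessness of $(K(a)|K,v)$. Over the henselian base $K^h$, Theorem \ref{Thm cdc Khanduja-Aghigh} tells us that an element admits a complete distinguished chain over $K^h$ iff the simple extension it generates is defectless. Since $(b,a)$ is a distinguished pair over $K^h$, $b$ admits a complete distinguished chain over $K^h$ iff $a$ does (one extends/truncates the chain by one link); hence $(K^h(b)|K^h,v)$ is defectless iff $(K^h(a)|K^h,v)$ is defectless. Now invoke the standard equivalence recalled in the introduction: a finite extension $(L|K,v)$ is defectless iff $(L^h|K^h,v)$ is defectless, i.e.\ iff $(K^h(c)|K^h,v)$ is defectless for $c$ generating $L$ (using $L^h = LK^h$). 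Applying this with $L = K(b)$ and $L = K(a)$ closes the defectless equivalence.

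Finally, for the unibranched equivalence I would use the characterization that $(K(c)|K,v)$ is unibranched iff $K(c)$ is linearly disjoint from $K^h$ over $K$, i.e.\ iff $[K(c)^h : K^h] = [K(c):K]$. So I must show $[K(b)^h:K^h] = [K(b):K]$ iff $[K(a)^h:K^h] = [K(a):K]$. Here the distinguished-pair structure over $K^h$ gives control over the degrees: from $j(f) = j(f^h)$ one reads off the relation between $[K(b):K]$ and $[K^h(b):K^h] = [K(b)^h:K^h]$ in terms of how the conjugates of $b$ over $K$ distribute into $K^h$-conjugacy classes, and similarly for $a$; the distinguished-pair relation ties these two ratios together. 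In fact the forthcoming Proposition \ref{Prop j(f) = j(f^h) = [K(b)^h:IC_K]} makes the link between $j$-invariants and the index $[K(b)^h : \mathrm{IC}_K]$ explicit, and I would feed that identity — applied to both $a$ and $b$, whose associated $j$-invariants agree with those of their henselian minimal polynomials — into a direct comparison of the two linear-disjointness conditions.

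**Main obstacle.** The crux is establishing $\d(b,K) = \d(b,K^h)$ together with the fact that $\deg_{K^h} a$ is minimal among elements of that distance from $b$; in other words, verifying (DP2) and (DP3) over $K^h$. The subtlety is that $K^h$ contains many new elements of low degree over $K^h$, and one must rule out that some such element beats $a$. This is exactly where the stability of the $j$-invariant under henselization is indispensable: $j(f) = j(f^h)$ says the "combinatorial shape" of the ball of radius $\g$ around $b$ — how many conjugates of $b$ it contains — is unchanged by passing to $K^h$, which prevents any collapse that would let a new low-degree element sneak closer to $b$.
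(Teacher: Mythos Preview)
Your overall strategy matches the paper's, and the defectless equivalence (extend or truncate a complete distinguished chain over $K^h$ by one link, using Theorem \ref{Thm cdc Khanduja-Aghigh}) is exactly what the paper does. However, your plan for the first assertion has two real gaps. First, the claim that ``$\d(b,K)\le\d(b,K^h)$ is immediate since every $z$ competing over $K$ competes over $K^h$'' is false as stated: from $[K(z):K]<[K(b):K]$ one cannot conclude $[K^h(z):K^h]<[K^h(b):K^h]$, because degrees can drop by different amounts under henselization. Second, for the reverse inequality you appeal to $j(f)=j(f^h)$, but $j(f^h)$ only counts $K^h$-conjugates of $b$ inside the ball of radius $\g$; it says nothing about an arbitrary $z$ of low $K^h$-degree that might lie closer to $b$. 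You also never verify that $f$ is a key polynomial for $w$, which is needed before you can apply Proposition \ref{Prop j(f) = j(f^h)} to $b$.

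The paper closes these gaps differently. It first shows (Lemma \ref{Lemma dist pair key pol}) that $f$ is a key polynomial for $w=v_{b,\g}$, so Proposition \ref{Prop j(f) = j(f^h)} applies to both $Q$ and $f$; combining this with Lemma \ref{Lemma deg f/ deg Q = j(f)/j(Q)} yields the degree-ratio identity $\deg f/\deg Q=j(f)/j(Q)=j(f^h)/j(Q^h)=\deg f^h/\deg Q^h$, which immediately gives $[K^h(a):K^h]<[K^h(b):K^h]$. The bound $\d(b,K^h)\le\g$ is obtained not via conjugates of $b$ but via Corollary \ref{Coro min pair henselization}: if $v(b-z)>\g$ then $(b,v(b-z))$ is a minimal pair over $K$ (Proposition \ref{Prop dist pair min pair}), hence over $K^h$, forcing $[K^h(z):K^h]\ge[K^h(b):K^h]$. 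The same corollary gives minimality of $(a,\g)$ over $K^h$, completing (DP1)--(DP3). Finally, the unibranched equivalence needs no detour through Proposition \ref{Prop j(f) = j(f^h) = [K(b)^h:IC_K]}: the degree-ratio identity already says $\deg f=\deg f^h$ iff $\deg Q=\deg Q^h$, which by the linear-disjointness criterion is exactly the statement that $(K(b)|K,v)$ is unibranched iff $(K(a)|K,v)$ is.
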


\pars Another significant consequence of the stability of the $j$-invariant is the observation that the extension $(K(a,X)|K(X),w)$ is defectless whenever $(a,\g)$ is a \textit{minimal} pair of definition for $w$ over $K$ (Corollary \ref{Coro K(a,X)|K(X) defectless}). This provides a very nice condition for the \textit{elimination of defect} of simple extensions. It is natural to inquire whether this observation holds true for any pair of definition. We obtain the following result in this direction:

\begin{Theorem}\label{Thm K(b,X)|K(b) is defectless}
	Let $(K(X)|K,w)$ be a valuation transcendental extension. Take an extension of $w$ to $\overline{K}(X)$ and take a pair of definition $(b,\g)$ for $w$. Take the minimal polynomial $f(X)$ of $b$ over $K$. Assume that either the extension $(K(b)|K,v)$ is defectless or that $f$ is a key polynomial for $w$ over $K$. Then $(K(b,X)|K(X),w)$ is defectless. 
\end{Theorem}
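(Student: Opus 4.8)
The plan is to reduce to the henselian case and then to play Corollary~\ref{Coro K(a,X)|K(X) defectless} off against an explicit computation for Gauss-type valuations. Since a finite extension is defectless if and only if it stays so after passing to henselizations, and since $K(X)^h=(K^h(X))^h$ and $K(b,X)^h=(K^h(b)(X))^h$ with respect to $w$, it suffices to show that $(K^h(b)(X)|K^h(X),w)$ is defectless. Both hypotheses survive this passage: if $(K(b)|K,v)$ is defectless then so is $(K^h(b)|K^h,v)$; and if $f$ is a key polynomial for $w$ over $K$ then, by the stability of the $j$-invariant (Proposition~\ref{Prop j(f) = j(f^h)}), the minimal polynomial $f^h$ of $b$ over $K^h$ is a key polynomial for $w$ over $K^h$. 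So from now on I would assume $K$ is henselian.

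Over a henselian field the key-polynomial hypothesis is the easy one: there, if $f$ is a key polynomial for $w=v_{b,\g}$ with $b$ among its roots, then $(b,\g)$ is a minimal pair of definition for $w$ over $K$, and Corollary~\ref{Coro K(a,X)|K(X) defectless} applies directly. So I would concentrate on the remaining case, that $(K(b)|K,v)$ is defectless, and induct on $[K(b):K]$. If $(b,\g)$ is itself a minimal pair of definition for $w$ over $K$, Corollary~\ref{Coro K(a,X)|K(X) defectless} again finishes the job. Otherwise some element of $\overline{K}$ of degree less than $[K(b):K]$ lies at distance $\geq\g$ from $b$, so $\g\leq\d(b,K)$; and since $(K(b)|K,v)$ is defectless and $K$ is henselian, $b$ admits a complete distinguished chain over $K$ (Theorem~\ref{Thm cdc Khanduja-Aghigh}), whence there is a distinguished pair $(b,a)$ over $K$. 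Now $v(b-a)=\d(b,K)\geq\g$, so $(a,\g)$ is again a pair of definition for $w=v_{a,\g}$ over $K$, and by Lemma~\ref{Lemma dist pair henselization} the extension $(K(a)|K,v)$ is defectless; as $[K(a):K]<[K(b):K]$, the inductive hypothesis yields that $(K(a)(X)|K(X),w)$ is defectless.

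It remains to climb from $K(a)$ up to $K(b)$. Because defect is multiplicative in towers, it is enough to prove that $(K(a,b)(X)|K(a)(X),w)$ is defectless, and here the situation is completely explicit: over $K(a)$ the valuation $w=v_{a,\g}$ has its centre $a$ in the ground field, so after the substitution $Y=X-a$ it becomes the Gauss valuation with $w(Y)=\g$, and $\b:=b-a$ satisfies $v(\b)\geq\g$. For such a Gauss valuation one computes directly that $wK(a,b)(Y)=vK(a,b)+\ZZ\g$ and $wK(a)(Y)=vK(a)+\ZZ\g$, and that $K(a,b)(Y)w$ is built from $K(a)(Y)w$ in precisely the way $K(a,b)v$ is built from $K(a)v$, corrected by a root extraction dictated by the orders of $\g$ modulo $vK(a)$ and modulo $vK(a,b)$; multiplying out, the ramification index and the residue degree of $(K(a,b)(Y)|K(a)(Y),w)$ have product $(vK(a,b):vK(a))\,[K(a,b)v:K(a)v]$. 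Provided $(K(a,b)|K(a),v)$ is defectless this product equals $[K(a,b):K(a)]=[K(a,b)(Y):K(a)(Y)]$, so there is equality in the Fundamental Inequality and the extension is defectless, completing the induction.

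The step I expect to be the main obstacle is the proviso just invoked: that $(K(a,b)|K(a),v)$---equivalently $(K(b)(a)|K(b),v)$, since the two defects coincide once $(K(a)|K,v)$ and $(K(b)|K,v)$ are known to be defectless---is defectless. Over fields carrying defect this is genuinely delicate, and it is exactly here that the defectlessness of $(K(b)|K,v)$ has to be combined with the geometry of the distinguished pair: $a$ lies at distance $\d(b,K)$ from $b\in K(b)$, so Krasner's lemma controls the roots of the minimal polynomial of $a$ over $K(b)$ relative to $b$, and this, together with $(K(b)|K,v)$ being defectless, forces $(K(b)(a)|K(b),v)$ to be defectless.
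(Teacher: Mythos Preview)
Your argument has two genuine gaps, one in each branch of the hypothesis.

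\textbf{The key polynomial case.} Your claim that over a henselian field, if $f$ is a key polynomial for $w=v_{b,\g}$ with $b$ a root of $f$, then $(b,\g)$ is a minimal pair, is false. Key polynomials of degree strictly larger than $\deg Q$ exist whenever $\g\in v\overline{K}$: indeed Lemma~\ref{Lemma dist pair key pol} exhibits exactly this situation, with $(b,a)$ a distinguished pair over a henselian $K$, $\g=\d(b,K)$, $(a,\g)$ the minimal pair, and $f$ the minimal polynomial of $b$ a key polynomial with $\deg f=[K(b):K]>[K(a):K]=\deg Q$. So Corollary~\ref{Coro K(a,X)|K(X) defectless} does not apply and this case is not handled. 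The paper treats it via Proposition~\ref{Prop d(K(a)|K) = d(K(b)|K)}, which shows $(vK(b):vK(a))[K(b)v:K(a)v]=\deg f/\deg Q$ directly from the key polynomial hypothesis.

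\textbf{The defectless case.} Your inductive strategy factors through $K(a)(X)$, and the Gauss-type computation over $K(a)$ correctly reduces the defectlessness of $(K(a,b)(X)|K(a)(X),w)$ to that of $(K(a,b)|K(a),v)$. But you do not prove the latter. Knowing that $(K(b)|K,v)$ and $(K(a)|K,v)$ are both defectless gives $d(K(a,b)|K(a),v)=d(K(a,b)|K(b),v)$, but neither side is controlled by the distinguished-pair condition alone, and your Krasner sketch does not establish it: $v(b-a)=\d(b,K)$ bears no obvious relation to the Krasner constant of $a$ over $K(b)$. The paper sidesteps this entirely. It never passes through $K(a,b)$; instead it computes $(wK(b,X):wK(X))[K(b,X)w:K(X)w]$ directly, obtaining $j(Q)\,(vK(b):vK(a))[K(b)v:K(a)v]$ by an explicit analysis of the $Q$-expansion and of $cd^fQ^e$ as a polynomial in $d(X-b)^E$. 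With $a$ chosen in the complete distinguished chain of $b$, both $(K(a)|K,v)$ and $(K(b)|K,v)$ are defectless, so the right-hand side is $j(Q)\cdot\deg f/\deg Q=j(f)=[K(b,X)^h:K(X)^h]$, which is exactly the degree. No information about $K(a,b)$ is needed.
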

A primary ingredient in the proof of the above theorem is the stability of the defect of extensions generated by pairs of definitions coming from key polynomials over henselian fields (Proposition \ref{Prop d(K(a)|K) = d(K(b)|K)}).

\pars Another motivation for Theorem \ref{Thm K(b,X)|K(b) is defectless} comes from the Generalized Stability Theorem (cf. [\ref{Kuh gen stab thm}, Theorem 1.1]). A valued field $(K,v)$ is said to be defectless if every finite extension $(L|K,v)$ is defectless. The Generalized Stability Theorem asserts the stability of defectless fields under suitable extensions. In particular, it posits that if $(K,v)$ is a defectless field and $(K(X)|K,w)$ is a valuation transcendental extension then $(K(X), w)$ is also defectless. The following question is natural:
\begin{itemize}
	\item Let $(K(b)|K,v)$ be a defectless extension and $(K(X)|K,w)$ a valuation transcendental extension. Is the extension $(K(b,X)|K(X),w)$ also defectless? 
\end{itemize}  
Our analysis yields an affirmative answer to this problem under the assumption that $(b,\g)$ is a pair of definition for $w$ for some $\g\in w\overline{K}(X)$.

The conclusion of Theorem \ref{Thm K(b,X)|K(b) is defectless} fails to hold if we remove the starting assumptions (Example \ref{Example K(b,X)|K(X) defect}). On the other hand, the converse to Theorem \ref{Thm K(b,X)|K(b) is defectless} is also not true (Example \ref{Example K(b,X)|K(X) defectless}).


\section{Analysis of the $j$-invariant}\label{Sec 2}

\subsection{Associated graded algebra and key polynomials} Let $(K(X)|K,w)$ be a valuation transcendental extension. For any $\g\in wK(X)$, we define
\begin{align*}
	P_\g&:= \{ f\in K[X] \mid wf\geq \g \}, \\
	P_\g^+&:= \{ f\in K[X] \mid wf > \g \}.
\end{align*}
Then $P_\g$ is an additive subgroup of $K[X]$ and $P_\g^+$ is a subgroup of $P_\g$. We define 
\[ \gr_w (K[X]) := \Dirsum_{\g\in wK(X)} P_\g / P_\g^+.  \]
For any $f\in K[X]$ with $wf=\g$, the image of $f$ in $P_\g / P_\g^+$ is said to be the \textbf{initial form} of $f$, and will be denoted by $\init_w (f)$. Given $f,g\in K[X]$ with $wf=wg$, we observe that
\[  \init_w(f) + \init_w(g) = 
\begin{cases}
	\init_w(f+g) &\quad\text{if}\quad wf =wg = w(f+g),\\
	0 &\quad\text{if}\quad w(f+g) > wf=wg.\\
\end{cases}
\]
Thus, 
\[ \init_w(f) = \init_w(g) \Longleftrightarrow w(f-g)>wf=wg.  \]
We define a multiplication on $\gr_w(K[X])$ by setting
\[ \init_w(f)\init_w(g):= \init_w(fg).  \]
We can directly check that this makes $\gr_w(K[X])$ into an integral domain. We will call it the \textbf{associated graded ring}. Observe that $\gr_w(K[X])$ is a graded ring with $P_\g / P_\g^+$ being the $\g$-th homogeneous component. Thus $\init_w(f)$ is homogeneous for every $f\in K[X]$. It follows that whenever $\init_w(g)$ is divisible by $\init_w(f)$ where $f,g \in K[X]$, there exists some $h\in K[X]$ such that $\init_w(g) = \init_w(h)\init_w(f)$.

\pars A monic polynomial $Q(X)\in K[X]$ is said to be a \textbf{key polynomial for $w$ over $K$} if it satisfies the following conditions for any $f,g,h \in K[X]$:
\mn (KP1) $\init_w(Q) \mid \init_w(f) \Longrightarrow \deg (f) \geq \deg (Q)$,
\n (KP2) $\init_w(Q) \mid \init_w(gh) \Longrightarrow \init_w(Q)\mid \init_w(g)$ or $\init_w(Q)\mid \init_w(h)$.\\
This notion of key polynomials was first introduced by Mac Lane [\ref{Mac Lane key pols}, \ref{Mac Lane key pols prime ideals}] and later extended by Vaqui\'{e} [\ref{Vaquie key pols}]. 

\pars Another notion of key polynomials, called the abstract key polynomials, was introduced by Spivakovsky and Novacoski in [\ref{Nova Spiva key pol pseudo convergent}]. Take an extension of $w$ to $\overline{K}(X)$. For any $f(X)\in K[X]$, we define
\[ \d_w(f):= \max \{ w(X-z) \mid \text{$z$ is a root of $f$} \}.  \]
It has been shown in [\ref{Novacoski key poly and min pairs}] that this value is independent of the choice of the extension. We say that a monic polynomial $Q(X)$ is an \textbf{abstract key polynomial for $w$ over $K$} if for any $f(X)\in K[X]$ we have
\[ \deg f < \deg Q \Longrightarrow \d_w(f) < \d_w(Q).   \]

\subsection{Associated value transcendental extension}\label{Section asso value tr extn} Let $(K(X)|K,w)$ be a valuation transcendental extension. Take an extension of $w$ to $\overline{K}(X)$ and take a minimal pair of definition $(a,\g)$ for $w$ over $K$. If $\g\notin v\overline{K}$ we set $\tilde{w}:= w$. If $\g\in v\overline{K}$, consider the ordered abelian group $G:= (v\overline{K}\dirsum \ZZ)_{\lex}$ and embed $v\overline{K}$ into $G$ by $\a \longmapsto (\a,0)$ for all $\a\in v\overline{K}$. Set $\G:= (\g, -1)$ and $\tilde{w}:= v_{a,\G}$. Then 
\[  \a\geq \g \text{ if and only if } \a\geq\G \text{ for all } \a\in v\overline{K}.   \]
As a consequence, $(a,\G)$ is a minimal pair of definition for $\tilde{w}$ over $K$ and $j_w(g) = j_{\tilde{w}}(g)$ for all $g(X)\in K[X]$. 

\begin{Remark}
	The valuation $\tilde{w}$ constructed above is referred to as the \textbf{associated value transcendental extension} in [\ref{Dutta invariant of valn tr extns and connection with key pols}, \ref{Dutta min pairs inertia ram deg impl const fields}] and plays an important role in obtaining bounds of the $j$-invariant (cf. [\ref{Dutta min fields implicit const fields}, \ref{Dutta min pairs inertia ram deg impl const fields}]).
\end{Remark}

\begin{Lemma}\label{Lemma deg f/ deg Q = j(f)/j(Q)}
	Let $(K(X)|K,w)$ be a valuation transcendental extension. Take an extension of $w$ to $\overline{K}(X)$ and take a minimal pair of definition $(a,\g)$ for $w$ over $K$. Take the minimal polynomial $Q(X)$ of $a$ over $K$ and any polynomial $f(X)\in K[X]$. Then 
	\[ \dfrac{j(f)}{j(Q)} \leq \dfrac{\deg f}{\deg Q}.  \]
	The above inequality is an equality whenever $f$ is a key polynomial for $w$ over $K$. If $(K,v)$ is henselian, then the inequality is an equality whenever $f$ is irreducible and $j(f)>0$.
\end{Lemma}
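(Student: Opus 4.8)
The plan is to first reduce to the value transcendental case using the associated valuation $\tilde w$ from Section \ref{Section asso value tr extn}: since $j_w(g) = j_{\tilde w}(g)$ for all $g \in K[X]$ and $(a,\G)$ remains a minimal pair of definition for $\tilde w$, and since being a key polynomial is detected by the graded ring structure which is insensitive to the passage from $\g$ to $\G$ (one should check this, or invoke the cited results of [\ref{Dutta invariant of valn tr extns and connection with key pols}]), we may assume $\g \notin v\overline K$, so that $w(X-a) = \g$. In this setting, for any $z \in \overline K$ we have $w(X-z) = \min\{v(a-z),\g\}$, and $v(a-z) \ge \g$ is equivalent to $w(X-z) = \g = \d_w(X-a)$. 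The quantity $j(f)$ thus counts the roots $z_i$ of $f$ (with multiplicity) for which $w(X-z_i)$ attains the maximal value $\g$.

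The first main step is the inequality $j(f)/j(Q) \le \deg f / \deg Q$. I would prove this by showing that $j(f) \cdot \deg Q \le j(Q) \cdot \deg f$ via a valuative computation: factor $f(X) = c\prod_i (X - z_i)$ over $\overline K$ and compute $w(f)$ as $v(c) + \sum_i w(X - z_i) = v(c) + j(f)\g + \sum_{v(a-z_i)<\g} v(a - z_i)$. The key point is that $\init_w(Q)$ divides $\init_w(f)$ precisely when the ``excess'' in this computation is maximal relative to the degree, and because $Q$ is the minimal polynomial of $a$ with $(a,\g)$ minimal, $\init_w(Q)$ is (the relevant power appearing in) a factor of $\init_w(f)$ in $\gr_w(K[X])$; comparing degrees of the homogeneous decomposition of $\init_w(f)$ in terms of $\init_w(Q)$ and the complementary factor gives the bound. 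Alternatively, and perhaps more cleanly, one writes the Euclidean division $f = qQ + r$ with $\deg r < \deg Q$, notes $j(r) \le j(Q) - 1$ type estimates won't directly help, so instead I would iterate the $Q$-adic expansion $f = \sum f_j Q^j$ and use sub-additivity/linearity of $j$ under the expansion together with $j(f_j) < j(Q)$ when $\deg f_j < \deg Q$.

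For the equality statements, when $f$ is a key polynomial for $w$ over $K$ the result is essentially [\ref{Dutta invariant of valn tr extns and connection with key pols}, Theorem 3.3 and Proposition 3.5], which say the $j$-invariant on $K[X]$ is governed by its value on the minimal polynomial of a minimal pair of definition; I would extract from there that $\init_w(f)$ and $\init_w(Q)$ generate the same ``degree-per-$j$'' ratio, forcing equality. For the henselian case with $f$ irreducible and $j(f) > 0$: here one uses that over a henselian field an irreducible polynomial with $j(f)>0$ is automatically a key polynomial (this is precisely the content flagged after the $j$-invariant discussion — $j(Q)>0$ is necessary for a key polynomial, and henselianity supplies the converse for irreducibles, cf. [\ref{Dutta invariant of valn tr extns and connection with key pols}, Section 4]), so this case reduces to the previous one. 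The main obstacle I anticipate is making the divisibility argument in $\gr_w(K[X])$ fully rigorous — specifically, showing that the homogeneous factor $\init_w(Q)$ appears to the ``expected'' power in $\init_w(f)$ rather than merely bounding it — which is why leaning on the $Q$-adic expansion and the already-established invariance and characterization results from [\ref{Dutta invariant of valn tr extns and connection with key pols}] is the safer route.
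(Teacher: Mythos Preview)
Your plan for the first two assertions is fine and essentially matches the paper, which simply cites [\ref{Dutta invariant of valn tr extns and connection with key pols}, Remark 3.8]; your $Q$-adic expansion idea is exactly what underlies that reference.

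The problem is your treatment of the henselian case. You propose to reduce it to the key-polynomial case by asserting that over a henselian field every irreducible $f$ with $j(f)>0$ is already a key polynomial for $w$ over $K$. This claim is \emph{false}, and the paper itself supplies a counterexample in Example~\ref{Example K(b,X)|K(X) defectless}: there $K$ is henselian, $f$ is the minimal polynomial of $b=a+\sqrt{t}$, one has $j(f)=2>0$, and yet $f$ is explicitly shown not to be a key polynomial for $w$ over $K$ (because $\g\notin\QQ$ forces $wQ$ to be torsion-free modulo $vK$, which precludes key polynomials of degree strictly larger than $\deg Q$). The cited passage about [\ref{Dutta invariant of valn tr extns and connection with key pols}, Section 4] only gives $j>0$ as a \emph{necessary} condition for being a key polynomial; henselianity does not supply the converse.

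The paper's actual argument for the henselian assertion is entirely different and does not pass through the key-polynomial property at all. One picks a root $b$ of $f$ with $v(a-b)\geq\g$, takes the minimal polynomial $m(X)$ of $Q(b)$ over $K$, and uses Vieta together with the conjugation-invariance of $v$ (this is where henselianity enters) to show that $M(X):=m(Q(X))$ satisfies $wM=wQ^d$ with $d=\deg m$. Since $f\mid M$, comparing the $Q$-adic expansions of $f$ and $M/f$ and using that $wQ$ is torsion-free modulo $vK$ (after passing to the associated value transcendental extension) forces $f=Q^r+\sum_{i<r}f_iQ^i$ with $wf=wQ^r$, and then [\ref{Dutta invariant of valn tr extns and connection with key pols}, Proposition 3.4] gives the equality. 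You will need to replace your reduction step with an argument of this kind.
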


\begin{proof}
	The first two assertions follow from [\ref{Dutta invariant of valn tr extns and connection with key pols}, Remark 3.8]. It is thus enough to show the final assertion. We can assume that $f$ is monic. Since $j(f)>0$, we take a root $b$ of $f$ such that $v(a-b)\geq\g$. The triangle inequality and the minimality of $(a,\g)$ imply that $w(X-z)\leq v(b-z)$ for all $z\in\overline{K}$, and equality holds whenever $[K(z):K]< \deg Q$. It follows that
	\begin{equation}\label{Eqn 1}
		wg \leq vg(b) \text{ for all } g(X)\in K[X], \text{ and  equality holds whenever } \deg g < \deg Q.
	\end{equation}    
Take the minimal polynomial $m(X):= X^d + c_{d-1}X^{d-1} + \dotsc + c_1 X + c_0$ of $Q(b)$ over $K$. For any $i<d$, applying Vieta's formulas and the triangle inequality, we obtain that
\[  vc_i \geq v(\s_1 Q(b) \dotsc \s_{d-i} Q(b)) \text{ for some } \s_1, \dotsc , \s_{d-i} \in \Gal(\overline{K}|K).  \]
Since $(K,v)$ is henselian, employing (\ref{Eqn 1}) we observe that $vc_i \geq (d-i) vQ(b) \geq (d-i) \deg Q$ and hence
\begin{equation}\label{Eqn 2}
	w(c_i Q^i) \geq wQ^d \text{ for all } i< d.
\end{equation}
Set $M(X):= m(Q(X)) = Q^d + c_{d-1}Q^{d-1} + \dotsc + c_1 Q + c_0$. Then $wM = \min\{ wQ^d, w(c_i Q^i) \}$ by [\ref{Novacoski key poly and min pairs}, Theorem 1.1]. It follows from (\ref{Eqn 2}) that 
\begin{equation}\label{Eqn 3}
	wM = wQ^d. 
\end{equation}
Observe that $b$ is a root of $M$. The irreducibility of $f$ implies that it is the minimal polynomial of $b$ over $K$ and hence divides $M$ over $K$. We can thus express $M=fg$ for some $g\in K[X]$. We have unique expressions $f = \sum_{i=0}^{r} f_i Q^i$ and $g = \sum_{j=0}^{s} g_j Q^j$ where $f_i, g_j \in K[X]$ such that $\deg f_i, \deg g_j < \deg Q$. Employing [\ref{Novacoski key poly and min pairs}, Theorem 1.1] again we can take $i,j$ such that $wf = w(f_i Q^i)$ and $wg = w(g_jQ^j)$. It follows that
\[  dwQ = wM = w(fg) = w(f_ig_j) + (i+j)wQ.  \]
Observe that the value of the $j$-invariant and the choice of the minimal pair remains unchanged if we consider the associated value transcendental extension. We can thus assume that $\g\notin v\overline{K}$ without any loss of generality. In particular, this implies that $wQ$ is torsion-free modulo $vK$. Since $w(f_ig_j) \in v\overline{K}$ by (\ref{Eqn 1}), we conclude that 
\[  d=i+j. \]
Since $d\deg Q = \deg M  = \deg fg  = \deg (f_r g_s Q^{r+s}) \geq (i+j) \deg Q$, we conclude that $i=r$ and $f_r = 1$. In other words, we have that $f = Q^r + \sum_{i=0}^{r-1} f_i Q^i$ and $wf = wQ^r$. The assertion is now a direct consequence of [\ref{Dutta invariant of valn tr extns and connection with key pols}, Proposition 3.4]. 
\end{proof}

\subsection{Stability of the $j$-invariant}

\begin{Proposition}\label{Prop j(f) = j(f^h)}
	Let $(K(X)|K,w)$ be a valuation transcendental extension. Take an extension of $w$ to $\overline{K}(X)$ and take a pair of definition $(b,\g)$ for $w$. Take the minimal polynomials $f(X)$ and $f^h(X)$ of $b$ over $K$ and $K^h$ respectively. Assume that $f(X)$ is a key polynomial for $w$ over $K$. Then
	\[  j(f) = j(f^h). \] 
	Moreover, $f^h$ is a key polynomial for $w$ over $K^h$.
\end{Proposition}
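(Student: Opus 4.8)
The plan is to prove $j(f)=j(f^h)$ by establishing the two inequalities separately, the reverse one carrying all the content, and then to deduce the final assertion. Since $b$ is a root of $f$ and $f^h$ is its minimal polynomial over $K^h$, we have $f^h\mid f$ in $K^h[X]$; writing $f=f^h g$ with $g\in K^h[X]$, the roots of $f^h$ (counted with multiplicity) form a sub-multiset of those of $f$, so $j(f^h)\le j(f)$, and more precisely $j(f)=j(f^h)+j(g)$. Thus everything reduces to showing $j(g)=0$, together with the last assertion.

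I would begin by passing to the associated value transcendental extension of Subsection \ref{Section asso value tr extn}, so as to assume $\gamma\notin v\overline{K}$; this changes neither the $j$-invariant of any polynomial, nor the key-polynomial property of $f$, nor the (minimal) pairs of definition over $K$ or over $K^h$. Next, since $b$ is a root of $f$ with $v(b-b)=\infty\ge\gamma$ while every root $z$ of $f$ satisfies $w(X-z)=\min\{v(b-z),\gamma\}\le\gamma$, we get $\delta_w(f)=\gamma$; and using that the key polynomials of a valuation transcendental extension are precisely the minimal polynomials of its minimal pairs of definition — all of the same degree, namely the minimal degree of a pair of definition — the fact that $f$ is a key polynomial with $b$ a pair-of-definition element forces $[K(b):K]$ to be this minimal degree, so $(b,\gamma)$ is a minimal pair of definition for $w$ over $K$ with minimal polynomial $f$.

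Now I would pass to $K^h$. Fix a minimal pair of definition $(a,\gamma)$ for $w$ over $K^h$ with minimal polynomial $Q^h$; necessarily $v(b-a)\ge\gamma$. Since $K^h$ is henselian, $f^h$ is irreducible over $K^h$ and $j(f^h)>0$, the last assertion of Lemma \ref{Lemma deg f/ deg Q = j(f)/j(Q)} gives $j(f^h)/j(Q^h)=\deg f^h/\deg Q^h$, while the same lemma gives $j(h_i)/j(Q^h)\le\deg h_i/\deg Q^h$ (with equality once $j(h_i)>0$) for every monic irreducible factor $h_i$ of $g$ over $K^h$. Hence $j(g)=0$ will follow as soon as one shows that no $h_i$ has a root within $\gamma$ of $b$; equivalently, that every $K$-conjugate $z$ of $b$ with $v(b-z)\ge\gamma$ is already a $K^h$-conjugate of $b$. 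Here I would compare the factorization over $K^h$ of the minimal polynomial of $z$ against the minimality data: minimality of $(b,\gamma)$ over $K$ forces $[K(z):K]\ge\deg f$, and — exploiting that $\gamma=\delta_w(f)$ is extremal and combining with minimality of $(a,\gamma)$ over $K^h$ and a Krasner-type estimate over the henselian field $K^h$ — one is led to place $z$ in the decomposition-group orbit of $b$, i.e.\ to identify $z$ with a $K^h$-conjugate of $b$; en route this also shows $(b,\gamma)$ is a minimal pair of definition over $K^h$. With $(b,\gamma)$ a minimal pair over $K^h$, its minimal polynomial $f^h$ is a key polynomial for $w$ over $K^h$, which is the final assertion.

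The principal obstacle is exactly this last step — showing $j(g)=0$, i.e.\ that the complementary factor $g=f/f^h$ has no root within $\gamma$ of $b$. This is where the key-polynomial hypothesis is essential: it is what promotes $(b,\gamma)$ to a minimal pair over $K$ and pins $\gamma$ at the extremal value $\delta_w(f)$, and this extremality is what, over the henselian field $K^h$, makes the Krasner estimate strong enough to force $\gamma$-close $K$-conjugates of $b$ to be $K^h$-conjugates. For a general pair of definition this can fail, so the hypothesis cannot be dropped.
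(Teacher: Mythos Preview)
Your argument has a genuine gap at its first substantive step: the passage to the associated value transcendental extension does \emph{not} preserve the key-polynomial property of $f$. In the residue transcendental case ($\gamma\in v\overline{K}$) there exist key polynomials of degree strictly larger than that of the minimal pair polynomial $Q$; a concrete instance is furnished by Lemma~\ref{Lemma dist pair key pol}, where a distinguished pair $(b,a)$ with $\gamma=v(b-a)$ yields a key polynomial $f$ for $w=v_{a,\gamma}$ with $\deg f=[K(b):K]>[K(a):K]=\deg Q$. After replacing $\gamma$ by $\Gamma=(\gamma,-1)$, the resulting $\tilde{w}$ is value transcendental, and by [\ref{Dutta invariant of valn tr extns and connection with key pols}, Theorem 3.10(3)] every key polynomial for $\tilde{w}$ has degree exactly $\deg Q$ --- so $f$ is no longer key. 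Your claim that ``the key polynomials of a valuation transcendental extension are precisely the minimal polynomials of its minimal pairs of definition --- all of the same degree'' is therefore false in the residue transcendental case, and the reduction you propose destroys precisely the hypothesis you need. The conclusion that $(b,\gamma)$ is a minimal pair over $K$ is unwarranted; in fact, the proposition is designed to cover the non-minimal situation, with Corollary~\ref{Coro min pair henselization} treating the minimal case as a consequence.

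Even granting minimality, your endgame for $j(g)=0$ is not a proof: you assert that a ``Krasner-type estimate'' over $K^h$ forces every $K$-conjugate $z$ of $b$ with $v(b-z)\geq\gamma$ to be a $K^h$-conjugate, but Krasner's lemma requires $v(b-z)$ to exceed $v(b-b')$ for \emph{every} $K^h$-conjugate $b'\neq b$, whereas here you only have $v(b-z)\geq\gamma$ and no control on $\kras(b,K^h)$ relative to $\gamma$. The paper's argument avoids these issues entirely: it works in the graded algebra $\gr_w(K^h[X])$, uses the isomorphism $\gr_w(K[X])\cong\gr_w(K^h[X])$ of [\ref{Nart Novacoski defect formula}, Theorem 1.2] to pull $\init_w(f^h)$ back to some $\init_w(g)$ with $g\in K[X]$, and then exploits the prime-like property (KP2) of the key polynomial $f$ applied to the factorization $\init_w(f)=\init_w(g)\init_w(g_1)$ to force $\init_w(f)\mid\init_w(g)$, hence $j(f)\leq j(g)=j(f^h)$. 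The key-polynomial hypothesis enters through (KP2), not through any reduction to minimal pairs.
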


\begin{proof}
	Observe that $j(f^h) \leq j(f)$ by definition. Since the canonical injection $\gr_w(K[X]) \subseteq  \gr_w(K^h[X])$ is actually an isomorphism by [\ref{Nart Novacoski defect formula}, Theorem 1.2], we have some $g(X)\in K[X]$ such that $\init_w(g) = \init_w(f^h)$. Then $j(g) = j(f^h)$ by [\ref{Dutta invariant of valn tr extns and connection with key pols}, Corollary 3.6]. Observe that $\init_w(f^h)$ divides $\init_w(f)$ in $\gr_w(K^h[X])$. Employing [\ref{Nart Novacoski defect formula}, Theorem 1.2] again, we obtain a polynomial $g_1(X)\in K[X]$ such that 
	\begin{equation}\label{Eqn 4}
		\init_w(f) = \init_w(g)\init_w(g_1).
	\end{equation}
	Then $j(f) = j(g)+j(g_1)$ by [\ref{Dutta invariant of valn tr extns and connection with key pols}, Proposition 3.2]. Since $j(g) = j(f^h)>0$ we conclude that $j(f) > j(g_1)$. By [\ref{Dutta invariant of valn tr extns and connection with key pols}, Corollary 3.6], $\init_w(f)$ does not divide $\init_w(g_1)$. Since $f$ is a key polynomial for $w$ over $K$, we conclude from (\ref{Eqn 4}) that $\init_w(f)$ divides $\init_w(g)$. Thus $j(f)\leq j(g) = j(f^h)$ and hence
	\[  j(f) = j(f^h). \] 
	Since $\init_w(f^h)$ divides $\init_w(f)$, we obtain from [\ref{Dutta invariant of valn tr extns and connection with key pols}, Theorem 3.7] that they are associates in $\gr_w(K^h[X])$. 
	
	\pars It remains to show that $f^h$ is a key polynomial for $w$ over $K^h$. Let $g(X)\in K^h[X]$ such that $\init_w(f^h)$ divides $\init_w(g)$. Then $j(g) \geq j(f^h)$ by [\ref{Dutta invariant of valn tr extns and connection with key pols}, Corollary 3.6]. As a consequence of Lemma \ref{Lemma deg f/ deg Q = j(f)/j(Q)}, we obtain that $\deg g \geq \deg f^h$, that is $f^h$ satisfies (KP1). The fact that $f^h$ also satisfies (KP2) follows from [\ref{Nart Novacoski defect formula}, Theorem 1.2], the assumption that $f$ is a key polynomial for $w$ over $K$ and the observation that $\init_w(f)$ and $\init_w(f^h)$ are associates.   
\end{proof}

In a slightly different form, the above result appears also in [\ref{Nart Novacoski defect formula}, Proposition 5.6]. This next result can also be found in [\ref{Nart Novacoski defect formula}, Theorem 5.16]. We present an alternate proof here. 

\begin{Corollary}\label{Coro min pair henselization}
	Let $(K(X)|K,w)$ be a valuation transcendental extension. Take an extension of $w$ to $\overline{K}(X)$ and take a minimal pair of definition $(a,\g)$ for $w$ over $K$. Then $(a,\g)$ is also a minimal pair of definition for $w$ over $K^h$.
\end{Corollary}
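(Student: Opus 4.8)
The plan is to reduce the claim that $(a,\g)$ remains a minimal pair of definition over $K^h$ to the stability of the $j$-invariant established in Proposition \ref{Prop j(f) = j(f^h)}, applied to the minimal polynomial $Q(X)$ of $a$ over $K$. First I would recall why $Q$ is a key polynomial for $w$ over $K$: since $(a,\g)$ is a minimal pair of definition, $j(Q)>0$, and by the characterization of the $j$-invariant over $K[X]$ (the results of [\ref{Dutta invariant of valn tr extns and connection with key pols}] cited in the introduction) together with Lemma \ref{Lemma deg f/ deg Q = j(f)/j(Q)}, the minimal polynomial attached to a minimal pair of definition is indeed a key polynomial for $w$ over $K$. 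This places us squarely in the hypotheses of Proposition \ref{Prop j(f) = j(f^h)}.

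Next I would let $Q^h(X)$ denote the minimal polynomial of $a$ over $K^h$. Since $(a,\g)$ is certainly a pair of definition for $w$ over $K^h$ (the defining identity $w(X-z)=\min\{v(a-z),\g\}$ does not care about the base field), it suffices to show it is of minimal degree among all pairs of definition over $K^h$. So suppose $(b,\g)$ is any pair of definition for $w$ over $K^h$, i.e.\ $b\in\overline{K}$ with $v(a-b)\geq\g$; I must show $[K^h(b):K^h]\geq[K^h(a):K^h]=\deg Q^h$. By Proposition \ref{Prop j(f) = j(f^h)} applied to $a$, we have $j(Q)=j(Q^h)$, and moreover $Q^h$ is a key polynomial for $w$ over $K^h$. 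Now apply Lemma \ref{Lemma deg f/ deg Q = j(f)/j(Q)} over the base field $K^h$, with the minimal pair candidate being $(a,\g)$ and minimal polynomial $Q^h$: for the minimal polynomial $g^h$ of $b$ over $K^h$ we get $j(g^h)/j(Q^h)\leq \deg g^h/\deg Q^h$. Since $(b,\g)$ is a pair of definition, $b$ is a root of some polynomial with $j>0$ relative to $(a,\g)$; more precisely $j(g^h)\geq 1$ because $v(a-b)\geq\g$, and in fact one sees $j(g^h)\geq j(Q^h)$ by comparing with the key polynomial property — every initial form divisible structure forces $\init_w(Q^h)\mid\init_w(g^h)$, whence $j(g^h)\geq j(Q^h)$ via [\ref{Dutta invariant of valn tr extns and connection with key pols}, Corollary 3.6]. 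Combining with the Lemma gives $\deg g^h\geq\deg Q^h$, i.e.\ $[K^h(b):K^h]\geq[K^h(a):K^h]$, which is exactly minimality.

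The main obstacle I anticipate is making rigorous the step "$(b,\g)$ a pair of definition over $K^h$ $\Rightarrow$ $\init_w(Q^h)\mid\init_w(g^h)$ (or at least $j(g^h)\geq j(Q^h)$)". One clean route: pass to $\overline{K^h}=\overline{K}$ and note that $(a,\g)$ being a minimal pair over $K$ that stays a pair of definition over $K^h$ means that, among roots of $g^h$, at least one — namely $b$ — satisfies $v(a-b)\geq\g$; but one needs \emph{all} the $K^h$-conjugates of $b$ that are "close to $a$" to be counted correctly, and here the fact that $K^h$ is henselian (so $w$ is the unique extension and Galois-conjugation over $K^h$ is well-behaved with respect to $v$) is what guarantees $j_w(g^h)\geq j_w(Q^h)$. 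Alternatively, and perhaps more simply, one can invoke directly that $Q^h$ is a key polynomial for $w$ over $K^h$ (from Proposition \ref{Prop j(f) = j(f^h)}) and that any pair of definition $(b,\g)$ over the henselian field $K^h$ has $\d_w(g^h)\ge \g$, forcing $\deg g^h\geq \deg Q^h$ by the abstract-key-polynomial characterization combined with Lemma \ref{Lemma deg f/ deg Q = j(f)/j(Q)} — this sidesteps the conjugacy bookkeeping entirely. I would present the argument along these latter lines, citing Proposition \ref{Prop j(f) = j(f^h)} and Lemma \ref{Lemma deg f/ deg Q = j(f)/j(Q)} as the two load-bearing ingredients.
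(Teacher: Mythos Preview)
You correctly identify Proposition~\ref{Prop j(f) = j(f^h)} and Lemma~\ref{Lemma deg f/ deg Q = j(f)/j(Q)} as the load-bearing ingredients, and you correctly flag the obstacle: the divisibility $\init_w(Q^h)\mid\init_w(g^h)$ (equivalently $j(g^h)\geq j(Q^h)$) is not available without already knowing that $(a,\g)$ is minimal over $K^h$. Your application of Lemma~\ref{Lemma deg f/ deg Q = j(f)/j(Q)} ``over $K^h$ with the minimal pair candidate $(a,\g)$'' is circular for the same reason: the lemma requires an \emph{actual} minimal pair over the base field as input.

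Your proposed fix along the second route does not close the gap either. Proposition~\ref{Prop j(f) = j(f^h)} says that $Q^h$ is a Mac Lane--Vaqui\'e key polynomial over $K^h$, not an \emph{abstract} key polynomial, and the two notions are distinct. Lemma~\ref{Lemma dist pair key pol} already exhibits the difference: for a distinguished pair $(b,a)$ over $K$ with $\g=\d(b,K)$, the minimal polynomial $f$ of $b$ is an MLV key polynomial for $w=v_{b,\g}$, while $(a,\g)$ is the minimal pair, so $\d_w(Q)=\g=\d_w(f)$ with $\deg Q<\deg f$; hence $f$ is not an abstract key polynomial. Thus ``$Q^h$ is a key polynomial'' alone cannot force $\deg g^h\geq\deg Q^h$ for every pair of definition $(b,\g)$ over $K^h$.

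The missing piece is the equality $j(Q^h)=j(f^h)$, where $(b,\g)$ is a \emph{genuine} minimal pair over $K^h$ with minimal polynomial $f^h$. Applying Lemma~\ref{Lemma deg f/ deg Q = j(f)/j(Q)} over $K^h$ with this minimal pair to the key polynomial $Q^h$ gives $\deg Q^h/\deg f^h=j(Q^h)/j(f^h)$; so $j(Q^h)=j(f^h)$ yields $\deg Q^h=\deg f^h$ and hence minimality of $(a,\g)$ over $K^h$. The paper obtains this equality by passing to the associated value transcendental extension so that $\g\notin v\overline{K}$, and then reading it off from the value group:
\[ vK(a)\dirsum\ZZ j(Q)\g = wK(X) = wK^h(X) = vK^h(b)\dirsum\ZZ j(f^h)\g \]
forces $j(Q)=j(f^h)$ since $\g$ is torsion-free modulo $v\overline{K}$, and $j(Q)=j(Q^h)$ by Proposition~\ref{Prop j(f) = j(f^h)}.
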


\begin{proof}
	Take a minimal pair of definition $(b,\g)$ for $w$ over $K^h$ and take the minimal polynomial $f^h$ of $b$ over $K^h$. As observed in Section \ref{Section asso value tr extn} we can assume that $\g \notin v\overline{K}$. It follows from [\ref{Dutta min fields implicit const fields}, Remark 3.3] that
	\[  vK(a) \dirsum \ZZ j(Q)\g = wK(X) = wK^h(X) = vK^h(b) \dirsum \ZZ j(f^h)\g.  \]
	Since $\g$ is torsion-free modulo $vK$, we conclude that $j(Q) = j(f^h)$. Take the minimal polynomial $Q^h$ of $a$ over $K^h$. Employing Proposition \ref{Prop j(f) = j(f^h)} and Lemma \ref{Lemma deg f/ deg Q = j(f)/j(Q)} we conclude that $\deg Q^h = \deg f^h$. The assertion now follows from the minimality of $(b,\g)$ over $K^h$. 
\end{proof}

The following description of the $j$-invariant is very useful when analysing the stability of defectless simple extensions (cf. Theorem \ref{Thm K(b,X)|K(b) is defectless}).

\begin{Proposition}\label{Prop j(f) = j(f^h) = [K(b)^h:IC_K]}
	Let notations and assumptions be as in Proposition \ref{Prop j(f) = j(f^h)}. Take an extension of $w$ to $\overline{K(X)}$. Then
	\[  j(f) = j(f^h) = [K(b,X)^h: K(X)^h].     \]
	In particular, this assertion is always true whenever $(b,\g)$ is a minimal pair of definition. 
\end{Proposition}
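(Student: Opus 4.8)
The plan is to compute $[K(b,X)^h:K(X)^h]$ by relating it to ramification and residue data, and then to match those data against $j(f)$ using the description of $wK(X)$ and $K(X)w$ afforded by a pair of definition. Since $(K(X)|K,w)$ is valuation transcendental and $(b,\gamma)$ is a pair of definition for $w$, the same holds with $K$ replaced by $K^h$: the extension $(K^h(X)|K^h,w)$ is valuation transcendental with pair of definition $(b,\gamma)$. First I would pass to the associated value transcendental extension as in Section \ref{Section asso value tr extn}, so that without loss of generality $\gamma\notin v\overline{K}$; this leaves both $j(f)$ and $j(f^h)$ unchanged (and it leaves the henselian degree unchanged as well, since henselization commutes with the relevant algebraic data). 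Under this hypothesis $\gamma$ is torsion-free modulo $vK$, and the computation of value groups and residue fields of valuation transcendental extensions via pairs of definition becomes especially clean.

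The key computation is this. Working over $K^h$: by the formulas for value groups and residue fields of $v_{b,\gamma}$ (cf. [\ref{Dutta min fields implicit const fields}, Remark 3.3] and the references [\ref{AP sur une classe}], [\ref{APZ characterization of residual trans extns}], [\ref{APZ2 minimal pairs}]), one has $wK^h(X) = vK^h(b)\oplus \ZZ\cdot j(f^h)\gamma$ and $K^h(X)w = K^h(b)v$. On the other hand, $K(b,X)^h$ sits over $K(X)^h$, and since $(K(b)|K,v)$ together with the extension $w$ of $v$ behaves well under henselization, the value group $wK(b,X)^h = w\overline{K(b,X)}\cap(\text{appropriate group})$ is generated by $vK(b)^h$ and $\gamma$ itself (because over the field $K(b)$, the element $b$ is already present, so $(b,\gamma)$ is a pair of definition making $\gamma = w(X-b)\in wK(b,X)$). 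Thus $wK(b,X)^h = vK(b)^h\oplus\ZZ\gamma$ and $K(b,X)^h w = K(b)^h v$. Plugging into Ostrowski's lemma for the (automatically defectless, since henselian and — via Theorem \ref{Thm K(b,X)|K(b) is defectless} or a direct argument — finite) extension $K(b,X)^h|K(X)^h$:
\[
[K(b,X)^h:K(X)^h] = (wK(b,X)^h : wK^h(X))\cdot[K(b,X)^h w : K^h(X)w].
\]
The residue-field factor is $[K(b)^h v : K^h(X)w] = [K(b)^h v : K^h(b)v]$, and the value-group factor is the index of $vK(b)^h\oplus\ZZ\gamma$ in $vK^h(b)\oplus\ZZ j(f^h)\gamma$. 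Since $K^h(b)v = K(b)^h v$ and $vK^h(b) = vK(b)^h$ (henselization of $K$ inside $K(b)$ gives $K(b)^h$, up to the usual identifications), the residue factor is $1$ and the value-group factor collapses to $(\ZZ\gamma : \ZZ j(f^h)\gamma) = j(f^h)$, using again that $\gamma$ is torsion-free modulo $vK(b)$. This gives $[K(b,X)^h:K(X)^h] = j(f^h) = j(f)$, the last equality by Proposition \ref{Prop j(f) = j(f^h)}.

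For the final sentence: when $(b,\gamma)$ is a minimal pair of definition, $f = f^h$ need not hold, but $f$ is then the minimal polynomial of $b$ over $K$ for a minimal pair, hence $j(f)>0$, and a necessary condition discussion (cf. [\ref{Dutta invariant of valn tr extns and connection with key pols}, Section 4]) shows $f$ is in fact a key polynomial for $w$ over $K$ — indeed minimal pairs always yield key polynomials. So the hypotheses of Proposition \ref{Prop j(f) = j(f^h)}, and hence of the present proposition, are met automatically in the minimal-pair case.

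I expect the main obstacle to be the bookkeeping around henselization: one must check carefully that $K(b)^h$ (the henselization of $K(b)$ with respect to the restricted valuation) coincides with the compositum $K^h\cdot K(b)$ inside $\overline{K}$ only when $(K(b)|K,v)$ is unibranched — in general $K^h\cdot K(b)$ may be a proper subfield — and that the identifications $vK(b)^h = vK^h(b)$, $K(b)^h v = K^h(b)v$ used above are the correct ones regardless. The clean way around this is to avoid $K^h(b)$ entirely and instead compute both $wK(b,X)^h$ and $wK^h(X)$ as subgroups of the fixed $w\overline{K}(X)$, using only that $\gamma\in w\overline{K}(X)$ is torsion-free modulo $v\overline{K}$ and the pair-of-definition formulas; then the index computation is purely a statement about the lattice generated by $\gamma$ over the (henselized) value group of $K(b)$, and the factor $j(f^h)$ falls out directly. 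The finiteness and defectlessness of $K(b,X)^h|K(X)^h$ needed to invoke Ostrowski should be cited from Theorem \ref{Thm K(b,X)|K(b) is defectless} or argued via the henselian case of Lemma \ref{Lemma deg f/ deg Q = j(f)/j(Q)}.
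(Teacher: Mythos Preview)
Your approach has two genuine gaps that the paper's argument avoids.

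\textbf{Circularity.} You invoke Theorem~\ref{Thm K(b,X)|K(b) is defectless} to obtain defectlessness of $(K(b,X)^h|K(X)^h,w)$ so that Ostrowski gives an equality rather than an inequality. But the paper's proof of Theorem~\ref{Thm K(b,X)|K(b) is defectless} opens by citing Corollary~\ref{Coro j(f) = [K(b):IC_K] when K=K^h}, which is itself an immediate consequence of the present Proposition~\ref{Prop j(f) = j(f^h) = [K(b)^h:IC_K]}. So this appeal is circular, and your alternative suggestion (``argued via the henselian case of Lemma~\ref{Lemma deg f/ deg Q = j(f)/j(Q)}'') does not produce a defectlessness statement. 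Without defectlessness you only get $[K(b,X)^h:K(X)^h]\geq j(f^h)$ from the Fundamental Inequality, and you have no upper bound.

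\textbf{Misapplied structure formulas.} The identities you quote from \textup{[\ref{Dutta min fields implicit const fields}, Remark~3.3]}, namely $wK^h(X)=vK^h(b)\oplus\ZZ\,j(f^h)\g$ and $K^h(X)w=K^h(b)v$, are stated for a \emph{minimal} pair of definition, not for an arbitrary pair $(b,\g)$. Over $K^h$ the correct formulas involve a minimal pair $(a,\g)$ and its polynomial $Q^h$, giving $wK^h(X)=vK^h(a)\oplus\ZZ\,j(Q^h)\g$ and $K^h(X)w=K^h(a)v$; these do not collapse to your expressions in general, and the residue factor $[K(b)^hv:K^h(a)v]$ need not be $1$. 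Your reduction to the value transcendental case does force $\deg f^h=\deg Q^h$ (since by \textup{[\ref{Dutta invariant of valn tr extns and connection with key pols}, Theorem 3.10(3)]} no key polynomial of larger degree exists when $\g\notin v\overline{K}$), which would make $(b,\g)$ minimal over $K^h$ and rescue the formulas --- but then you must justify that passing from $w$ to $\tilde{w}$ leaves $[K(b,X)^h:K(X)^h]$ unchanged, and ``henselization commutes with the relevant algebraic data'' is not an argument: $w$ and $\tilde{w}$ are different valuations on $K(X)$, with different henselizations.

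The paper sidesteps both problems. It first settles the \emph{minimal}-pair case directly: the inequality $j(f)\leq[K(b,X)^h:K(X)^h]$ is imported from \textup{[\ref{Dutta min pairs inertia ram deg impl const fields}, Theorem~1.1]}, and the reverse inequality comes from a short decomposition-group argument showing every $K(X)^h$-conjugate $b'$ of $b$ satisfies $v(b-b')\geq\g$. For the general key-polynomial case it introduces the implicit constant field $IC_K$ (the relative algebraic closure of $K$ in $K(X)^h$), proves $[K(z,X)^h:K(X)^h]=[K(z)^h:IC_K]$ for any pair of definition $(z,\g)$, and then uses the ratio $\deg f^h/\deg Q^h=j(f^h)/j(Q^h)$ from Lemma~\ref{Lemma deg f/ deg Q = j(f)/j(Q)} together with the already-established minimal-pair case $j(Q^h)=[K(a)^h:IC_K]$ to conclude $j(f^h)=[K(b)^h:IC_K]$. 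No defectlessness is assumed anywhere.
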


\begin{proof}
	We first assume that $(b,\g)$ is a minimal pair of definition for $w$ over $K$. Then $f$ is a key polynomial by [\ref{Dutta invariant of valn tr extns and connection with key pols}, Theorem 3.10(1)] and hence $j(f) = j(f^h)$ by Proposition \ref{Prop j(f) = j(f^h)}. From [\ref{Dutta min pairs inertia ram deg impl const fields}, Theorem 1.1] we obtain that
		\[  j(f) \leq [K(b,X)^h:K(X)^h].  \]
		Take a $K(X)^h$-conjugate $b^\prime$ of $b$. Then $b^\prime = \s b$ for some $\s$ in the decomposition group $G^d(\overline{K(X)}|K(X),w)$. It follows that
		\[ w(X-b^\prime) =  (w\circ \s) (X-b) = w(X-b) = \g. \]
		From the triangle inequality we obtain $v(b-b^\prime)\geq\g$ and as a consequence $[K(b,X)^h:K(X)^h]\leq j(f)$. The assertion now follows. 
		
		\pars We now assume that $(b,\g)$ is an arbitrary pair of definition for $w$. Take a minimal pair of definition $(a,\g)$ for $w$ over $K$ and take the minimal polynomials $Q$ and $Q^h$ of $a$ over $K$ and $K^h$. For any algebraic extension $L|K$, denote by $IC_L$ the relative algebraic closure of $K$ in $L(X)^h$. Observe that
		\[ K^h \subseteq IC_K = IC_{K^h}.   \]
		It has been observed in [\ref{Dutta min fields implicit const fields}, Lemma 5.1] that $IC_K \subseteq K(z)^h$ for any pair of definition $(z,\g)$. As a consequence, $IC_K(z) = K(z)^h$. Since $IC_K$ is relatively algebraically closed in $K(X)^h$, we obtain that
		\begin{equation}\label{Eqn 5}
			[K(z,X)^h:K(X)^h] = [K(z)^h:IC_K] \text{ for any pair of definition } (z,\g).
		\end{equation}
		In particular, it follows from our prior observations that
		\begin{equation}\label{Eqn 6}
			j(Q^h) = [K(a)^h:IC_K].
		\end{equation}
		Lemma \ref{Lemma deg f/ deg Q = j(f)/j(Q)} yields that
		\[ \dfrac{\deg f^h}{\deg Q^h} = \dfrac{j(f^h)}{j(Q^h)}.  \]
		In light of (\ref{Eqn 6}), we then conclude that $j(f^h) = [K(b)^h:IC_K]$. The proposition now follows from (\ref{Eqn 5}) and Proposition \ref{Prop j(f) = j(f^h)}.
\end{proof}

\begin{Remark}
	The field $IC_K$ is referred to as the \textbf{implicit constant field} and was introduced by Kuhlmann in [\ref{Kuh value groups residue fields rational fn fields}] to study extensions of valuations to rational function fields with prescribed value groups and residue fields. 
\end{Remark}

The following corollary is immediate in light of [\ref{Dutta min pairs inertia ram deg impl const fields}, Theorem 1.1]:

\begin{Corollary}\label{Coro K(a,X)|K(X) defectless}
	Let notations and assumptions be as in Corollary \ref{Coro min pair henselization}. Then $(K(a,X)|K(X),w)$ is a defectless extension. 
\end{Corollary}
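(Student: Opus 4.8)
The plan is to deduce the corollary by feeding the equality established in Proposition \ref{Prop j(f) = j(f^h) = [K(b)^h:IC_K]} into the ramification/residue formulas of [\ref{Dutta min pairs inertia ram deg impl const fields}, Theorem 1.1]; once both are on the table, the Fundamental Inequality forces the defect to be trivial. Write $Q(X)$ for the minimal polynomial of $a$ over $K$. Since $(a,\g)$ is a minimal pair of definition for $w$ over $K$, the polynomial $Q$ is a key polynomial for $w$ over $K$ by [\ref{Dutta invariant of valn tr extns and connection with key pols}, Theorem 3.10(1)], so Proposition \ref{Prop j(f) = j(f^h) = [K(b)^h:IC_K]} applies with $(b,f) = (a,Q)$: after fixing an extension of $w$ to $\overline{K(X)}$ we obtain
\[ j(Q) = [K(a,X)^h : K(X)^h]. \]
Note that $(K(a,X)|K(X),w)$ is a genuine finite extension, of degree $[K(a):K] = \deg Q$, so the defect formalism applies to it.

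Next I would invoke [\ref{Dutta min pairs inertia ram deg impl const fields}, Theorem 1.1], which computes the ramification index and the residue degree of $(K(a,X)|K(X),w)$ and identifies $j(Q)$ with their product:
\[ (wK(a,X):wK(X))\,[K(a,X)w:K(X)w] = j(Q). \]
Comparing this with the Lemma of Ostrowski, namely with
\[ [K(a,X)^h:K(X)^h] = d(K(a,X)|K(X),w)\cdot(wK(a,X):wK(X))\,[K(a,X)w:K(X)w], \]
and substituting the first display gives $j(Q) = d(K(a,X)|K(X),w)\cdot j(Q)$, whence $d(K(a,X)|K(X),w) = 1$. Thus $(K(a,X)|K(X),w)$ is defectless.

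There is no real obstacle remaining here: the substance --- the stability $j(f) = j(f^h)$ and its identification with $[K(b,X)^h:K(X)^h]$ --- was already carried out in Propositions \ref{Prop j(f) = j(f^h)} and \ref{Prop j(f) = j(f^h) = [K(b)^h:IC_K]}. The only points requiring care are to apply [\ref{Dutta min pairs inertia ram deg impl const fields}, Theorem 1.1] with its hypotheses in the exact form needed (a minimal pair of definition over $K$) and to make the choices of extension of $w$ to $\overline{K(X)}$ compatible when forming the two henselizations, which is automatic since they both sit inside that common overfield.
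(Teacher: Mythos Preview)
Your argument is correct and is exactly the unpacking the paper has in mind: the paper merely says the corollary ``is immediate in light of [\ref{Dutta min pairs inertia ram deg impl const fields}, Theorem 1.1]'' right after Proposition \ref{Prop j(f) = j(f^h) = [K(b)^h:IC_K]}, and you have simply written out how those two ingredients combine via the Lemma of Ostrowski.
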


We can relax the conditions of Proposition \ref{Prop j(f) = j(f^h) = [K(b)^h:IC_K]} even further over henselian fields:

\begin{Corollary}\label{Coro j(f) = [K(b):IC_K] when K=K^h}
	Let $(K(X)|K,w)$ be a valuation transcendental extension. Take an extension of $w$ to $\overline{K(X)}$ and take a pair of definition $(b,\g)$ for $w$ over $K$. Take the minimal polynomial $f(X)$ of $b$ over $K$. Assume that $(K,v)$ is henselian. Then
	\[  j(f) = [K(b,X)^h:K(X)^h].  \]
\end{Corollary}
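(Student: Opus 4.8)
The plan is to reduce the henselian statement to Proposition \ref{Prop j(f) = j(f^h) = [K(b)^h:IC_K]} by observing that when $(K,v)$ is henselian, the henselization $K^h$ equals $K$, so that $f^h = f$ and the distinction between the minimal polynomials over $K$ and over $K^h$ collapses. Thus it suffices to verify that $f$ satisfies the hypothesis of Proposition \ref{Prop j(f) = j(f^h) = [K(b)^h:IC_K]}, namely that $f$ is a key polynomial for $w$ over $K$. The key point is that over a henselian field this hypothesis comes for free: by Lemma \ref{Lemma deg f/ deg Q = j(f)/j(Q)}, when $(K,v)$ is henselian and $f$ is irreducible with $j(f) > 0$, we have $j(f)/j(Q) = \deg f/\deg Q$ where $Q$ is the minimal polynomial of a minimal pair of definition $(a,\g)$.

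First I would argue that $j(f) > 0$: since $(b,\g)$ is a pair of definition for $w$, we have $w(X-b) = \g$, and the triangle inequality applied to a root $a$ of $Q$ (coming from a minimal pair $(a,\g)$) together with minimality forces $v(b - a) \ge \g$, so $b$ itself contributes to $j(f)$. Then, with $j(f) > 0$ and $f$ irreducible, the final assertion of Lemma \ref{Lemma deg f/ deg Q = j(f)/j(Q)} gives the equality $j(f)/j(Q) = \deg f/\deg Q$, which is precisely the content needed in the proof of Proposition \ref{Prop j(f) = j(f^h) = [K(b)^h:IC_K]} (where this equality was derived from $f$ being a key polynomial). Alternatively, one can invoke the characterization that over a henselian field every irreducible polynomial with $j(f) > 0$ is a key polynomial, so that the hypothesis of Proposition \ref{Prop j(f) = j(f^h) = [K(b)^h:IC_K]} is automatically satisfied.

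Next I would trace through the proof of Proposition \ref{Prop j(f) = j(f^h) = [K(b)^h:IC_K]} in this special case. Since $K = K^h$, equation (\ref{Eqn 5}) becomes $[K(z,X)^h : K(X)^h] = [K(z) : IC_K]$, equation (\ref{Eqn 6}) becomes $j(Q) = [K(a) : IC_K]$, and the ratio computation $\deg f/\deg Q = j(f)/j(Q)$ (now justified directly by Lemma \ref{Lemma deg f/ deg Q = j(f)/j(Q)} rather than by the key polynomial hypothesis) yields $j(f) = [K(b) : IC_K] = [K(b,X)^h : K(X)^h]$. The main obstacle, such as it is, is simply to confirm that all the ingredients of Proposition \ref{Prop j(f) = j(f^h) = [K(b)^h:IC_K]}'s proof remain valid without the key polynomial assumption once we are over a henselian field — and the resolution is exactly the final clause of Lemma \ref{Lemma deg f/ deg Q = j(f)/j(Q)}, which supplies the only place that assumption was used. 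I expect the proof to be genuinely short.

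\begin{proof}
	Since $(K,v)$ is henselian we have $K^h = K$, so that $f^h = f$. It is enough to check that the conclusion of Proposition \ref{Prop j(f) = j(f^h) = [K(b)^h:IC_K]} holds; we verify the relevant steps of its proof directly. Take a minimal pair of definition $(a,\g)$ for $w$ over $K$ and let $Q$ be the minimal polynomial of $a$ over $K$. Since $(b,\g)$ is a pair of definition for $w$, the triangle inequality together with the minimality of $(a,\g)$ gives $v(b-a)\geq\g$, so $j(f)>0$. As $f$ is irreducible and $(K,v)$ is henselian, the final assertion of Lemma \ref{Lemma deg f/ deg Q = j(f)/j(Q)} yields
	\[  \dfrac{\deg f}{\deg Q} = \dfrac{j(f)}{j(Q)}.  \]
	For any algebraic extension $L|K$, denote by $IC_L$ the relative algebraic closure of $K$ in $L(X)^h$; since $K=K^h$ we have $IC_K = IC_{K^h}$. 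By [\ref{Dutta min fields implicit const fields}, Lemma 5.1], $IC_K\subseteq K(z)^h$ for any pair of definition $(z,\g)$, hence $IC_K(z) = K(z)^h$, and as $IC_K$ is relatively algebraically closed in $K(X)^h$ we obtain
	\[ [K(z,X)^h:K(X)^h] = [K(z):IC_K] \text{ for any pair of definition } (z,\g).  \]
	In particular, arguing as in the minimal-pair case of Proposition \ref{Prop j(f) = j(f^h) = [K(b)^h:IC_K]} we get $j(Q) = [K(a):IC_K] = [K(a,X)^h:K(X)^h]$. Combining the displayed ratio with this identity gives $j(f) = [K(b):IC_K]$, and therefore
	\[  j(f) = [K(b):IC_K] = [K(b,X)^h:K(X)^h].  \]
\end{proof}
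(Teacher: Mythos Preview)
Your proof is correct and follows essentially the same route as the paper's: both take a minimal pair $(a,\gamma)$ with minimal polynomial $Q$, invoke the henselian clause of Lemma \ref{Lemma deg f/ deg Q = j(f)/j(Q)} to obtain $\deg f/\deg Q = j(f)/j(Q)$, use Proposition \ref{Prop j(f) = j(f^h) = [K(b)^h:IC_K]} for the minimal pair to identify $j(Q)=[K(a):IC_K]$, and then conclude via the degree identity $[K(b):IC_K]=[K(b,X)^h:K(X)^h]$. The paper's version is terser (it omits the verification that $j(f)>0$ and does not rewrite the $IC_K$ identities), but the logical skeleton is identical; your only superfluous step is routing $j(f)>0$ through $v(b-a)\geq\gamma$ when $b$ is itself a root of $f$.
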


\begin{proof}
	Take a minimal pair of definition $(a,\g)$ for $w$ over $K$ and the minimal polynomial $Q$ of $a$ over $K$. Then $\deg f / \deg Q = j(f)/j(Q)$ by Lemma \ref{Lemma deg f/ deg Q = j(f)/j(Q)}. We have $j(Q) = [K(a):IC_K]$ by Proposition \ref{Prop j(f) = j(f^h) = [K(b)^h:IC_K]}. As a consequence, we have that $j(f) = [K(b):IC_K]$. The assertion now follows. 
\end{proof}


\section{Analysis of distinguished pairs}

\begin{Lemma}\label{Lemma dist pair key pol}
	Let $(b,a)$ be a distinguished pair over $K$ and set $\g:= v(b-a)$. Let $f(X)$ be the minimal polynomial of $b$ over $K$. Then $f$ is a key polynomial for $w:= v_{b,\g}$ over $K$.
\end{Lemma}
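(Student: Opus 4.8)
The plan is to show directly that $f$ satisfies the two defining conditions (KP1) and (KP2) for a key polynomial with respect to $w = v_{b,\g}$, where $\g = v(b-a) = \d(b,K)$. The starting point is Proposition \ref{Prop dist pair min pair}, which tells us that $(a,\g)$ is a minimal pair of definition for $w$ over $K$; in particular $\g$ lies in $v\overline{K}$ and we may pass to the associated value transcendental extension $\tilde w$ as in Section \ref{Section asso value tr extn} without changing $j$-invariants, minimal pairs, or initial forms, so I would assume $\g \notin v\overline{K}$ (equivalently $wX$-related values are torsion-free modulo $vK$) from the outset. Write $Q$ for the minimal polynomial of $a$ over $K$, so $\deg Q < \deg f$ and $j(Q) = [K(a):\text{(implicit constant field)}]$ by the results of Section \ref{Sec 2}.

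The first and main point is to compute $j(f)$. Since $(b,a)$ is a distinguished pair, $v(b-a) = \g$, so $b$ itself is a root $z$ of $f$ with $v(b-z) = \g \geq \g$, giving $j(f) \geq 1$. I would argue $j(f) = 1$: if $b'$ were another root of $f$ with $v(b-b') \geq \g$, then $(b',\g)$ would also be a pair of definition for $w$ and $[K(b'):K] = [K(b):K] = \deg f$; but any element $z$ with $[K(z):K] < \deg f = [K(b):K]$ satisfies $v(b-z) \leq v(b-a) = \g$ by (DP2) and in fact one uses the distinguished-pair structure to rule out a second conjugate at distance $\ge \g$ — here I would invoke the known description (from [\ref{Dutta invariant of valn tr extns and connection with key pols}], and implicit in the discussion of minimal pairs) that for a \emph{minimal} pair of definition $(a,\g)$ the associated $j(Q) $ counts exactly the conjugates of $a$ within distance $\g$, combined with $\deg f / \deg Q = j(f)/j(Q)$, which will follow once we know $f$ is a key polynomial — so this must instead be set up without circularity. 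The clean way: show $wf = w((X-b)) + (\text{lower})$, i.e. compute $wf = \sum_{z} w(X - z)$ over the roots $z$ of $f$, and observe $w(X-z) = \min\{v(b-z),\g\}$; the roots with $v(b-z) \ge \g$ contribute $\g$ each and the rest contribute $v(b-z) = v(a - z) < \g$ (using minimality of $(a,\g)$ and the triangle inequality, exactly as in equation (\ref{Eqn 1}) of the proof of Lemma \ref{Lemma deg f/ deg Q = j(f)/j(Q)}). Then $j(f) = $ number of roots at distance $\ge \g$, and the distinguished-pair hypothesis (DP1)–(DP3) forces this number to be $1$: a Galois conjugate $b' = \sigma b$ with $v(b - b') \ge \g$ would make $a' := \sigma a$ satisfy $v(b' - a') = \g$ and $[K(a'):K] = [K(a):K]$, and then $a'$ would be an element closer to... — the decisive estimate is that $v(b - a') \ge \min\{v(b-b'), v(b'-a')\} = \g$ with equality, so $a'$ is again a "closest" element of degree $[K(a):K]$; this is consistent, so to get $j(f)=1$ I would instead directly use that $Q \nmid f$ over $K$ together with the irreducibility of $f$, and show $\init_w(Q) \nmid \init_w(f)$, which is really the content of (KP1) for $f$ and circumvents computing $j(f)$ exactly.

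Reorganizing: I would prove (KP1) first. Suppose $\init_w(Q') \mid \init_w(f)$ for some $Q' \in K[X]$; I must show $\deg Q' \ge \deg f$. Using the $Q$-adic (Mac Lane) expansion machinery and [\ref{Novacoski key poly and min pairs}, Theorem 1.1] exactly as in the last paragraph of the proof of Lemma \ref{Lemma deg f/ deg Q = j(f)/j(Q)}, write everything in terms of $Q$; the key input is that $f$, having $b$ as a root with $v(a-b) = \g$ and $f$ irreducible of degree $> \deg Q$, has $f$-adic leading behaviour forcing $wf = wQ^{\deg f / \deg Q}$ and $\deg f / \deg Q = j(f)/j(Q)$ \emph{provided} $j(f) = j(Q) \cdot \deg f / \deg Q$; but actually the cleanest route is: $f$ is irreducible and $b$ is a root with $(b,\g)$ a pair of definition, so by [\ref{Dutta invariant of valn tr extns and connection with key pols}, Theorem 3.10] or the surrounding results, the minimal polynomial of a pair-of-definition element is a key polynomial whenever that element has minimal degree \emph{among elements at distance $\ge \g$} — and the distinguished-pair condition (DP1) says precisely that $b$ has strictly larger degree than $a$ while (DP3) pins down $a$; I would instead cite that $b$ itself, being at distance $\g$ from $a$ and of degree $[K(b):K] = \deg f$, and $\g = \d(b,K)$, is an "invariant-minimal" pair of definition in the sense that no element of degree $< \deg f$ is at distance $\ge \g$ (immediate from $\d(b,K) = \g$), hence by [\ref{Dutta invariant of valn tr extns and connection with key pols}, Theorem 3.10(1)] applied to the pair $(b,\g)$, $f$ is a key polynomial for $w = v_{b,\g}$ over $K$. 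That citation does essentially all the work.

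The honest summary of the plan, then: (1) set $\g = \d(b,K)$ and note $\d(b,K) = \max\{v(b-z) : [K(z):K] < \deg f\}$, so \emph{every} $z$ with $[K(z):K] < \deg f$ has $v(b-z) \le \g$, i.e. $b$ is an element of degree $\deg f$ with no proper-degree element at distance $> \g$, and in fact none at distance $\ge \g$ of smaller degree would contradict minimality unless forced; (2) invoke the characterization (from [\ref{Dutta invariant of valn tr extns and connection with key pols}]) that for such a pair $(b,\g)$ — one where $b$ realizes its own $\delta$-value and no lower-degree competitor reaches it — the minimal polynomial $f$ of $b$ is automatically a key polynomial for $v_{b,\g}$; (3) conclude. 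The main obstacle I anticipate is formulating step (2) with exactly the right quoted statement: the papers [\ref{Dutta invariant of valn tr extns and connection with key pols}] and [\ref{Dutta min fields implicit const fields}] phrase the correspondence between key polynomials and (minimal) pairs of definition in terms of $j$-invariants and degrees, so I would need to check that "$\g = \d(b,K)$" translates into the hypothesis "$f$ has minimal degree among polynomials $g$ with $j_w(g) > 0$", or equivalently that $j_w(f) = 1$ — and proving $j_w(f) = 1$ cleanly (no conjugate of $b$ other than $b$ lies within $\g$ of $b$) is where the real argument sits: it follows because a conjugate $b' \ne b$ with $v(b-b') \ge \g$ would, via $\sigma \in \Gal(\overline K | K)$ with $\sigma b = b'$, produce $\sigma a$ with $v(b - \sigma a) \ge \g$ and $[K(\sigma a):K] = [K(a):K] < \deg f$, hence $v(b - \sigma a) \le \d(b,K) = \g$, so $v(b - \sigma a) = \g$ — consistent, so $j_w(f)=1$ actually requires the \emph{stronger} reading of (DP2)–(DP3) or a direct degree count, and I would pin it down using that the $f$-adic expansion of $f$ is trivial while $Q \nmid f$, giving $\init_w(Q) \nmid \init_w(f)$ and thus (KP1) for $f$ directly, after which (KP2) is the standard consequence of irreducibility plus [\ref{Novacoski key poly and min pairs}, Theorem 1.1] as in Lemma \ref{Lemma deg f/ deg Q = j(f)/j(Q)}.
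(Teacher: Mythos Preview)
Your proposal never closes, and each route you sketch hits a concrete obstruction. The claim $j_w(f)=1$ is false in general: conditions (DP1)--(DP3) only bound $v(b-z)$ for $z$ of degree \emph{strictly less} than $[K(b):K]$, so nothing prevents $K$-conjugates $b'\neq b$ from satisfying $v(b-b')\ge\g$ --- and indeed, once the lemma is known, Lemma \ref{Lemma deg f/ deg Q = j(f)/j(Q)} forces $j(f)=j(Q)\cdot\deg f/\deg Q>1$. Your Galois manoeuvre only reaches $v(b-\s a)=\g$, which, as you yourself note, is no contradiction. The appeal to [\ref{Dutta invariant of valn tr extns and connection with key pols}, Theorem 3.10(1)] fails because $(b,\g)$ is \emph{not} a minimal pair for $w$: by Proposition \ref{Prop dist pair min pair} the minimal pair is $(a,\g)$ with $[K(a):K]<[K(b):K]$, so that theorem produces $Q$, not $f$. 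And the final fallback is not an argument either: showing $\init_w(Q)\nmid\init_w(f)$ is irrelevant to (KP1) for $f$, which concerns polynomials $g$ with $\init_w(f)\mid\init_w(g)$; and irreducibility of $f$ in $K[X]$ does not by itself make $\init_w(f)$ prime in $\gr_w(K[X])$, so (KP2) is not free.

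The paper's proof rests on a single idea you did not try: pass to an auxiliary valuation $w_1:=v_{b,\g_1}$ for any $\g_1\in v\overline K$ with $\g_1>\g$. By the second assertion of Proposition \ref{Prop dist pair min pair}, $(b,\g_1)$ \emph{is} a minimal pair for $w_1$, so [\ref{Novacoski key poly and min pairs}, Theorem 1.1] makes $f$ an abstract key polynomial for $w_1$; one then checks that $Q$ is also an abstract key polynomial for $w_1$ and that $f$ has minimal degree among abstract key polynomials $g$ with $\d_{w_1}(g)>\d_{w_1}(Q)$. The comparison theorem [\ref{Decaup Spiva Mahboub ABKP comparison MVKP}, Theorem 26] then delivers $f$ as a Mac Lane--Vaqui\'e key polynomial for $w$. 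The missing idea was precisely this shift $\g\rightsquigarrow\g_1$, which turns $b$ into a minimal-pair element for a nearby valuation and lets the abstract-key-polynomial machinery apply.
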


\begin{proof}
 Take any $\g_1\in v\overline{K}$ such that $\g_1>\g$ and set $w_1:= v_{b,\g_1}$. Then $(b,\g_1)$ is a minimal pair of definition for $w_1$ over $K$. Hence $f$ is an abstract key polynomial for $w_1$ over $K$ by [\ref{Novacoski key poly and min pairs}, Theorem 1.1]. Take the minimal polynomial $Q(X)$ of $a$ over $K$. From the given conditions we can further observe that $Q$ is an abstract key polynomial for $w_1$ over $K$, moreover, $f$ is an abstract key polynomial of minimal degree such that $\d_{w_1}(f) > \d_{w_1}(Q)$. The assertion now follows from [\ref{Decaup Spiva Mahboub ABKP comparison MVKP}, Theorem 26]. 
\end{proof}

\subsection{Proof of Lemma \ref{Lemma dist pair henselization}}

\begin{proof}
	Denote by $f$ and $Q$ the minimal polynomials of $b$ and $a$ over $K$, and those over $K^h$ by $f^h$ and $Q^h$. Set $\g:= v(b-a)$ and $w:= v_{b,\g}$. Then $(a,\g)$ is a minimal pair of definition for $w$ over $K$, hence also over $K^h$ by Corollary \ref{Coro min pair henselization}. It follows from Lemma \ref{Lemma dist pair key pol} that $f$ is a key polynomial for $w$ over $K$. Then $f^h$ is a key polynomial for $w$ over $K^h$ by Proposition \ref{Prop j(f) = j(f^h)}. As a consequence, employing Proposition \ref{Prop j(f) = j(f^h)} and Lemma \ref{Lemma deg f/ deg Q = j(f)/j(Q)} we obtain that
	\begin{equation}\label{Eqn 7}
		\dfrac{\deg f}{\deg Q} = \dfrac{j(f)}{j(Q)} = \dfrac{j(f^h)}{j(Q^h)} = \dfrac{\deg f^h}{\deg Q^h}.
	\end{equation}
	 Since $(b,a)$ is a distinguished pair over $K$, we conclude that
	\begin{equation}\label{Eqn 8}
		[K^h(a):K^h] < [K^h(b):K^h].
	\end{equation} 
Take $z\in\overline{K}$ such that $\g_1:= v(b-z) > \g$. Then $(b,\g_1)$ is a minimal pair of definition for $w_1:= v_{b,\g_1}$ over $K$ and hence also over $K^h$ by Corollary \ref{Coro min pair henselization}. Thus $[K^h(b): K^h] \leq [K^h(z):K^h]$ whenever $v(b-z)> \g$. From (\ref{Eqn 8}) we conclude that
\[ \g= \d(b,K^h).  \]
The first assertion now follows from the minimality of $(a,\g)$ over $K^h$. 

\pars We now assume that $(K(b)|K,v)$ is defectless. Then $(K^h(b)|K^h,v)$ is defectless and hence $b$ admits a complete distinguished chain over $K^h$ by Theorem \ref{Thm cdc Khanduja-Aghigh}. Take such a chain $b, b_1, b_2, \dotsc , b_n$. Since $(b,a)$ is a distinguished pair over $K^h$, we observe that $b,a,b_2, \dotsc , b_n$ is also a complete distinguished chain over $K^h$. Then $a,b_2, \dotsc , b_n$ forms a complete distinguished chain of $a$ over $K^h$ and hence $(K(a)|K,v)$ is defectless by Theorem \ref{Thm cdc Khanduja-Aghigh}. Conversely, the fact that $(b,a)$ is a distinguished pair over $K^h$ implies that a complete distinguished chain of $a$ over $K^h$ extends to one of $b$. We thus have the reverse implication. 

\pars Finally, it follows from (\ref{Eqn 7}) that $f = f^h$ if and only if $Q=Q^h$. Employing [\ref{Kuh max imm extns of valued fields}, Lemma 2.1], we then observe that $(K(b)|K,v)$ is unibranched if and only if $(K(a)|K,v)$ is also unibranched.
\end{proof}

\subsection{Proof of Theorem \ref{Thm central}}

\begin{proof}
	We first assume that $a$ has a complete distinguished chain over $K$. Take such a chain $a, a_1, \dotsc , a_n$. By repeated applications of Lemma \ref{Lemma dist pair henselization} we obtain that it is also a complete distinguished chain over $K^h$. Hence $(K(a)|K,v)$ is defectless by Theorem \ref{Thm cdc Khanduja-Aghigh}. Since $a_n \in K$, $(K(a_n)|K,v)$ is unibranched. Repeated implementations of Lemma \ref{Lemma dist pair henselization} then yield that $(K(a)|K,v)$ is unibranched.
	
	\pars Conversely assume that $(K(a)|K,v)$ is defectless and unibranched. Then $a$ admits a complete distinguished chain $a, a_1, \dotsc , a_n$ over $K^h$ by Theorem \ref{Thm cdc Khanduja-Aghigh}. Set $\g:= v(a-a_1)$ and $w:= v_{a,\g}$. Then $(a_1, \g)$ is a minimal pair of definition for $w$ over $K^h$. Take a minimal pair or definition $(b,\g)$ for $w$ over $K$. By Corollary \ref{Coro min pair henselization}, $(b,\g)$ is also a minimal pair of definition for $w$ over $K^h$. As a consequence, 
	\begin{equation}\label{Eqn 9}
		[K^h(b):K^h] = [K^h(a_1):K^h] < [K^h(a):K^h] = [K(a):K],
	\end{equation}
	where the last equality follows from [\ref{Kuh max imm extns of valued fields}, Lemma 2.1]. Since $\d(a,K^h) = \g$ and $v(a-b)\geq \g$, we conclude that
	\[  v(a-b) = \g. \]
	It follows that
	\begin{equation}\label{Eqn 10}
		(a,b) \text{ is a distinguished pair over } K^h. 
	\end{equation}   
The minimality of $(b,\g)$ over $K$ implies that $[K(b):K] \leq [K(a):K]$. If we have equality, then $(a,\g)$ would also form a minimal pair of definition of $w$ over $K$ and hence over $K^h$ by Corollary \ref{Coro min pair henselization}. However, this would contradict (\ref{Eqn 9}). It follows that 
\begin{equation}\label{Eqn 11}
	[K(b):K] < [K(a):K]. 
\end{equation} 
Take $z\in\overline{K}$ such that $v(a-z) > \g = \d(a,K^h)$. Then we have the following chain of relations:
\[ [K(z):K] \geq [K^h(z):K^h] \geq [K^h(a):K^h] = [K(a):K].   \]
Consequently, from (\ref{Eqn 11}) we have that
\[ \d(a,K) = \g. \]
Employing the minimality of $(b,\g)$ over $K$ we conclude that
\[ (a,b)\text{ is a distinguished pair over } K.   \]
It then follows from Lemma \ref{Lemma dist pair henselization} that $(K(b)|K,v)$ is also defectless and unibranched. In light of (\ref{Eqn 10}) we observe that $a,b,a_2, \dotsc, a_n$ forms a complete distinguished chain of $a$ over $K^h$. Hence $b, a_2, \dotsc , a_n$ forms a complete distinguished chain of $b$ over $K^h$. The assertion now follows by induction.    
\end{proof}

The following corollary is immediate: 

\begin{Corollary}\label{Coro cdc over K implies over K^h}
	Assume that $a\in\overline{K}$ admits a complete distinguished chain over $K$. Then $a$ also admits a complete distinguished chain over $K^h$. 
\end{Corollary}

\begin{Proposition}
	Assume that $a\in\overline{K}$ admits a complete distinguished chain $a_0 (=a), a_1, \dotsc , a_n$ over $K$. Take the minimal polynomials $Q_i$ of $a_i$ over $K$. Then 
	\[ \deg Q_{i+1} \mid \deg Q_i \text{ for all } i\geq 0.  \]
\end{Proposition}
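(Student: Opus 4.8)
The plan is to reduce the statement to a single step of a complete distinguished chain and show that consecutive degrees divide. More precisely, it suffices to prove: if $(b,a)$ is a distinguished pair over $K$ with minimal polynomials $g$ and $Q$ of $b$ and $a$ respectively, then $\deg Q \mid \deg g$; applying this to each consecutive pair $(a_i,a_{i+1})$ in the chain then gives $\deg Q_{i+1}\mid \deg Q_i$ for all $i$. So I would fix such a distinguished pair $(b,a)$, set $\g:=v(b-a)$ and $w:=v_{b,\g}$.

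First I would invoke Proposition \ref{Prop dist pair min pair} to see that $(a,\g)$ is a minimal pair of definition for $w=v_{b,\g}$ over $K$, and Lemma \ref{Lemma dist pair key pol} to see that $g$ is a key polynomial for $w$ over $K$. With $Q$ the minimal polynomial of $a$, Lemma \ref{Lemma deg f/ deg Q = j(f)/j(Q)} applied to the key polynomial $g$ gives the exact equality
\[ \frac{\deg g}{\deg Q} = \frac{j(g)}{j(Q)}. \]
Now the point is to identify $j(Q)$ and $j(g)$. Passing if necessary to the associated value transcendental extension of Section \ref{Section asso value tr extn} (which changes neither the $j$-invariant nor the minimal pair), I may assume $\g\notin v\overline{K}$; this is the clean setting in which the $j$-invariant behaves like a degree. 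By the characterization in [\ref{Dutta invariant of valn tr extns and connection with key pols}, Proposition 3.4] (already used at the end of the proof of Lemma \ref{Lemma deg f/ deg Q = j(f)/j(Q)}), writing $g = Q^r + \sum_{i<r} g_i Q^i$ with $\deg g_i<\deg Q$ and $wg = wQ^r$, one has $j(g) = r\cdot j(Q)$. Hence $\deg g/\deg Q = r$ is a positive integer, i.e. $\deg Q \mid \deg g$.

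The main obstacle is making the passage to the associated value transcendental extension rigorous and checking that the degree ratio $\deg g/\deg Q$ — which a priori only equals the rational number $j(g)/j(Q)$ — is forced to be an integer; this is exactly where the torsion-freeness of $wQ$ modulo $vK$ (guaranteed by $\g\notin v\overline{K}$) does the work, since it pins down the expansion of $g$ in powers of $Q$ to have leading term $Q^r$ with no lower-degree correction to the value. An alternative, perhaps slicker, route avoids $j$-invariants entirely: since $(b,a)$ is a distinguished pair, $g$ is an abstract key polynomial for $w_1:=v_{b,\g_1}$ ($\g_1>\g$, $\g_1\in v\overline{K}$) and $Q$ is one of strictly smaller degree with $\d_{w_1}(g)>\d_{w_1}(Q)$; the MacLane–Vaqui\'e / abstract key polynomial machinery then expresses $g$ as a $Q$-expansion whose degree is a multiple of $\deg Q$. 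I would present the $j$-invariant argument since all the needed pieces are already assembled in Section \ref{Sec 2}, and only remark on the abstract-key-polynomial alternative.

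\begin{proof}
	It suffices to prove the claim for a single distinguished pair: if $(b,a)$ is a distinguished pair over $K$ with minimal polynomials $g$ and $Q$ of $b$ and $a$ respectively, then $\deg Q \mid \deg g$. Applying this to each consecutive pair $(a_i,a_{i+1})$ yields $\deg Q_{i+1}\mid \deg Q_i$ for all $i\geq 0$.

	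So let $(b,a)$ be a distinguished pair over $K$, set $\g:= v(b-a)$ and $w:= v_{b,\g}$. By Proposition \ref{Prop dist pair min pair}, $(a,\g)$ is a minimal pair of definition for $w$ over $K$, and by Lemma \ref{Lemma dist pair key pol}, $g$ is a key polynomial for $w$ over $K$. Passing to the associated value transcendental extension as in Section \ref{Section asso value tr extn} changes neither the $j$-invariant nor the minimal pair, so we may assume $\g\notin v\overline{K}$; in particular $wQ$ is torsion-free modulo $vK$.

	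Write $g = Q^r + \sum_{i=0}^{r-1} g_i Q^i$ with $g_i\in K[X]$, $\deg g_i < \deg Q$. By [\ref{Novacoski key poly and min pairs}, Theorem 1.1] we have $wg = \min\{ wQ^r, w(g_iQ^i)\}$. Since $\deg g_i < \deg Q$ and $(a,\g)$ is a minimal pair of definition, the argument in the proof of Lemma \ref{Lemma deg f/ deg Q = j(f)/j(Q)} (cf. (\ref{Eqn 1})) gives $wg_i = vg_i(a)\in v\overline{K}$ for each $i$, so $w(g_iQ^i)\equiv i\cdot wQ \pmod{v\overline{K}}$. As $wQ$ is torsion-free modulo $vK$ these residues are distinct for $0\le i\le r$, forcing $wg = wQ^r$. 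Then by [\ref{Dutta invariant of valn tr extns and connection with key pols}, Proposition 3.4] we have $j(g) = r\cdot j(Q)$, while by Lemma \ref{Lemma deg f/ deg Q = j(f)/j(Q)}, since $g$ is a key polynomial for $w$ over $K$,
	\[ \frac{\deg g}{\deg Q} = \frac{j(g)}{j(Q)} = r\in\NN. \]
	Hence $\deg Q \mid \deg g$, completing the proof.
\end{proof}
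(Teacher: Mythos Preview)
Your setup is identical to the paper's: reduce to a single distinguished pair $(b,a)$, set $\g = v(b-a)$ and $w = v_{b,\g}$, observe that $(a,\g)$ is a minimal pair of definition for $w$ over $K$ (Proposition \ref{Prop dist pair min pair}) and that the minimal polynomial $g$ of $b$ is a key polynomial for $w$ over $K$ (Lemma \ref{Lemma dist pair key pol}). The paper then finishes in one line by citing [\ref{Dutta invariant of valn tr extns and connection with key pols}, Remark 3.8], which already records that $\deg Q \mid \deg g$ whenever $g$ is a key polynomial.

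Your attempt to unpack this has a genuine circularity. When you write $g = Q^r + \sum_{i<r} g_i Q^i$ with $\deg g_i < \deg Q$, you are asserting that the top coefficient in the $Q$-expansion of $g$ is $1$, equivalently that $\deg g = r\deg Q$; but that is exactly the divisibility you are trying to establish. In general the $Q$-expansion reads $g = \sum_{i=0}^{s} g_i Q^i$ with $g_s \neq 0$ of possibly positive degree, and one has to \emph{prove} that $\deg g_s = 0$.

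Even granting that form, the step ``these residues are distinct for $0\le i\le r$, forcing $wg = wQ^r$'' is a non sequitur: distinctness of the classes $w(g_iQ^i)$ modulo $v\overline{K}$ only tells you that the minimum $\min_i w(g_iQ^i)$ is attained at a \emph{unique} index $i_0$, not that $i_0 = r$. Indeed, after passing to the associated value transcendental extension (so $\g\notin v\overline{K}$), every key polynomial has degree $\deg Q$ by [\ref{Dutta invariant of valn tr extns and connection with key pols}, Theorem 3.10(3)], so $g$ is no longer a key polynomial there and nothing forces the minimum to sit at the top index. Your subsequent appeal to [\ref{Dutta invariant of valn tr extns and connection with key pols}, Proposition 3.4] then rests on the unproved hypotheses $g_r = 1$ and $wg = wQ^r$, so the conclusion $j(g) = r\,j(Q)$ is not established and the final equality $\deg g/\deg Q = r$ merely restates what was assumed. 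The missing content is precisely what [\ref{Dutta invariant of valn tr extns and connection with key pols}, Remark 3.8] supplies; either cite it, as the paper does, or actually carry out the argument pinning down $i_0$ and $g_s$ (compare the last paragraph of the proof of Lemma \ref{Lemma deg f/ deg Q = j(f)/j(Q)}, which does real work to obtain $f_r = 1$ and $wf = wQ^r$).
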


\begin{proof}
	Take any $0\leq i < n$ and consider $w:= v_{a_i,\g }$ where $\g:= v(a_i - a_{i+1}) = \d(a_i, K)$. Then $Q_i$ is a key polynomial for $w$ over $K$ by Lemma \ref{Lemma dist pair key pol}. Observe that $(a_{i+1}, \g)$ is a minimal pair of definition for $w$ over $K$. The assertion now follows from [\ref{Dutta invariant of valn tr extns and connection with key pols}, Remark 3.8].
\end{proof}

The converse to Corollary \ref{Coro cdc over K implies over K^h} is not true since there can be defectless extensions which are not unibranched. For example take any non-henselian field $(K,v)$ and take $b\in K^h\setminus K$. Then $(K(b)|K,v)$ is such an extension. A more involved example is provided underneath.

\begin{Example}\label{Example} 
	Let $k$ be a field with $\ch k = p >0$ such that $k$ is not Artin-Schreier closed, that is, $k$ admits irreducible Artin-Schreier polynomials. Denote by $(K,v)$ the valued field $k(t)$ equipped with the $t$-adic valuation. Then $\widehat{K}:= k((t))$ is the completion of $K$. Set
	\[ a:= \sum_{i=0}^{\infty} t^{p^i} \in \widehat{K}\setminus K.  \]
	Observe that $a^p -a + t = 0$. Then $a\in K^h = \widehat{K}\sect \overline{K}$. The fact that $a\notin K$ implies that the Artin-Schreier polynomial $X^p-X+t$ is irreducible over $K$ and hence 
	\[ [K(a):K] =p.  \]
	Let $X^p - X -c \in k[X]$ be irreducible over $k$. Take $b\in \overline{K}$ such that
	\[ b^p - b = c-a.  \]
	The fact that $v(c-a)=0$ implies that $vb =0$. Observe that $bv$ is a root of $X^p - X - c$. It follows that
	\[  [K(b)v: Kv] \geq [Kv(bv): Kv] =[k(bv) : k] = p.  \]
	Since $[K^h(b) : K^h]\leq p$, the Fundamental Inequality yields that
	\[ [K^h(b) : K^h] = p = [K(b)v: Kv].  \]
	In particular, $(K(b)|K,v)$ is a defectless extension. Observe that $a\in K(b)$. As a consequence we have that $[K(b) : K(a)] = p$ and hence $[K(b):K]= p^2$. Thus $(K(b)|K,v)$ is not unibranched by [\ref{Kuh max imm extns of valued fields}, Lemma 2.1].

	\parm We can also directly observe that $b$ admits a complete distinguished chain over $K^h$ but not over $K$. First of all, if $v(b-z)>0$ for some $z\in K^h$, then $zv = bv$ and hence $zv$ is a root of $X^p - X-c$. But this would imply that $X^p-X-c$ splits over $Kv = k$ which would contradict our starting assumption. It follows that $v(b-z)\leq 0$ for all $z\in K^h$. Since $vb =0$ we conclude that $\d(b,K^h) = 0$ and $b,0$ forms a complete distinguished chain of $b$ over $K^h$.   
	
	\pars We will now illustrate the fact that $b$ does not admit a complete distinguished chain over $K$. For all $n\in\NN$, take $b_n \in \overline{K}$ such that 
	\[ b_n^p - b_n = c-t- t^p - \dotsc - t^{p^n}.  \]
	Then,
	\begin{equation}\label{Eqn 12}
		(b_n - b)^p - (b_n - b) = a - t- t^p - \dotsc - t^{p^n} = \sum_{i=n+1}^{\infty} t^{p^i}.
	\end{equation}
	Observe that $b_n v$ is also a root of the Artin-Schreier polynomial $X^p - X - c$ and hence $b_n v = bv - \zeta$ for some $\zeta \in \FF_p$. It follows that $v(b_n - b + \zeta)>0$. Since $(b_n - b + \zeta)^p - (b_n - b + \zeta) = (b_n-b)^p - (b_n-b) $, we infer from (\ref{Eqn 12}) that
	\[ v(b - b_n - \zeta) = p^{n+1}.   \]
	Observe that $[K(b_n+\zeta):K] = [K(b_n):K] = p < [K(b):K]$. Thus for any $n\in\NN$ we can find $b^\prime_n$ such that $[K(b^\prime_n) : K] < [K(b):K]$ and $v(b-b^\prime_n) = p^{n+1}$. We conclude that there does not exist any $b^\prime\in\overline{K}$ such that $(b,b^\prime)$ forms a distinguished pair over $K$. In particular, $b$ does not admit a complete distinguished chain over $K$. 
\end{Example}

This next example illustrates that the converse to Proposition \ref{Prop j(f) = j(f^h)} is not true: 

\begin{Example}
	Let $(K,v)$ and $a$ be as in Example \ref{Example}. Take the valuation $w:= v_{0,\g}$ for any real number $0<\g<1$. The fact that $va=1>\g$ implies that $(a,\g)$ is a pair of definition for $w$. The minimal polynomial of $a$ over $K$ is given by the Artin-Schreier polynomial $f(X):= X^p-X+t$. Then $v(a-a^\prime) = 0<\g$ for any $K$-conjugate $a^\prime$ of $a$ which is distinct from $a$. Moreover, the polynomial $f(X)$ is separable. It follows that $j(f) = 1$. Since $a\in K^h$, we have that $f^h = X-a$ and hence $j(f^h)=1$ as well. Observe that 
	\[ w(f) = w(X^p - X + t) = \min \{ p\g, \g, 1 \} = \g < w(X^p+t).  \]
	Thus $\init_w (f) = \init_w (X)$. Since $\deg f > \deg X$, it follows that $f$ is not a key polynomial for $w$ over $K$.
\end{Example}


\section{Defect and pairs of definition}

\subsection{Stability of defect over henselian fields}

\begin{Definition}
	Let $(K(X)|K,w)$ be a valuation transcendental extension. Take a key polynomial $f$ for $w$ over $K$ and take $\b$ in some ordered abelian group containing $wK(X)$ such that $\b> wf$. We define the map $w^\prime : K[X] \longrightarrow wK(X)+\ZZ\b$ by setting $w^\prime g:= \min \{ wg_i + i\b \}$, where $g = \sum g_i f^i$ is the unique expansion with $\deg g_i < \deg f$. Extending $w^\prime$ canonically to $K(X)$ defines a valuation on $K(X)$, which is said to be an \textbf{ordinary augmentation} of $w$. We will denote it as $w^\prime = [w; f, \b]$.
\end{Definition}

\begin{Remark}\label{Remark gw = g(b)v}
	Take a pair of definition $(b,\g)$ for $w$ over $K$. For any polynomial $g(X)\in K[X]$, write $g(X) = (X-z_1)\dotsc (X-z_n)$ where $z_i\in\overline{K}$. Since $w(X-z_i)\leq v(X-b)=\g$, it follows that $wg\leq vg(b)$. Equality holds whenever $j(g)=0$. Moreover, if $wg=0$ and $j(g)=0$, it follows from [\ref{Dutta invariant of valn tr extns and connection with key pols}, Theorem 3.7] that $gw = g(b)v$.
\end{Remark}

\begin{Lemma}\label{Lemma wK(X) = vK(b)}
		Let $(K(X)|K,w)$ be a valuation transcendental extension. Take an extension of $w$ to $\overline{K}(X)$ and take a minimal pair of definition $(a,\g)$ for $w$ over $K$. Take the minimal polynomial $Q(X)$ of $a$ over $K$ and any key polynomial $f(X)$ of $w$ over $K$. Assume that $\deg f > \deg Q$. Then there is a root $b$ of $f$ such that $wg = vg(b)$ for all $g(X)\in K[X]$ with $\deg g < \deg f$, and $wK(X) = vK(b)$. 
\end{Lemma}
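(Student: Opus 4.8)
My approach is to extract the root $b$ from the key-polynomial hypothesis, describe $w$ in terms of $b$, and then read off the value group. Since $f$ is a key polynomial for $w$ we have $j(f)>0$ (cf. [\ref{Dutta invariant of valn tr extns and connection with key pols}, Section 4]), so $f$ has a root $b$ with $v(a-b)\geq\g$; then $(b,\g)$ is again a pair of definition for $w$, i.e.\ $w=v_{b,\g}$, and writing $g=\prod_i(X-z_i)\in K[X]$ one gets $wg=\sum_i\min\{v(b-z_i),\g\}\leq\sum_i v(b-z_i)=vg(b)$, with equality exactly when $v(b-z_i)\leq\g$ for every root $z_i$ of $g$. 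So the first assertion is equivalent to: every $z\in\overline{K}$ with $[K(z):K]<\deg f$ satisfies $v(b-z)\leq\g$, i.e.\ $\d(b,K)\leq\g$. Once this holds, $v(b-a)\leq\g$ combined with $v(b-a)\geq\g$ gives $v(b-a)=\g=\d(b,K)$, so $(b,a)$ is a distinguished pair over $K$ (in particular $\g\in v\overline{K}$, and $w$ is residually transcendental over $K$).

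The heart of the matter is the inequality $\d(b,K)\leq\g$, which I would prove by the device of Lemma \ref{Lemma dist pair key pol}. Fix any $\g_1\in v\overline{K}$ with $\g_1>\g$ and set $w_1:=v_{b,\g_1}$. Since $f$ is a key polynomial for $w=v_{b,\g}$, the comparison between Mac Lane--Vaqui\'{e} and abstract key polynomials [\ref{Decaup Spiva Mahboub ABKP comparison MVKP}, Theorem 26] (used in the opposite direction in Lemma \ref{Lemma dist pair key pol}) shows that $f$ is an abstract key polynomial for $w_1$ over $K$; since $w_1(X-b)=\g_1$ and $f$ is the minimal polynomial of $b$ over $K$, we get $\d_{w_1}(f)=\g_1$. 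Now take $z\in\overline{K}$ with $[K(z):K]<\deg f$ and let $g$ be its minimal polynomial over $K$; then $\deg g<\deg f$, so $\d_{w_1}(g)<\d_{w_1}(f)=\g_1$, hence in particular $v(b-z)<\g_1$. As $\g_1>\g$ was arbitrary this forces $v(b-z)\leq\g$, which is exactly $\d(b,K)\leq\g$; combined with the observation above we obtain $wg=vg(b)$ for all $g\in K[X]$ with $\deg g<\deg f$.

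It remains to prove $wK(X)=vK(b)$. Since $K(b)=K[X]/(f)$, every nonzero element of $K(b)$ is $g(b)$ for a unique $g\in K[X]$ with $\deg g<\deg f$, and $vg(b)=wg$ by the previous step; thus $vK(b)=\{\,wg\mid g\in K[X],\ g\neq 0,\ \deg g<\deg f\,\}\subseteq wK(X)$. For the reverse inclusion I would use that $f$ is a key polynomial: each $\phi\in K[X]$ has a unique $f$-expansion $\phi=\sum_i\phi_i f^i$ with $\deg\phi_i<\deg f$, and $w\phi=\min_i(w\phi_i+i\,wf)$ (cf.\ [\ref{Vaquie key pols}]); as each $w\phi_i\in vK(b)$ this gives $wK(X)\subseteq vK(b)+\ZZ\,wf$, and it suffices to check $wf\in vK(b)$. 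Here I would invoke the distinguished-pair structure of $(b,a)$: since $\g\in v\overline{K}$, passing if necessary to the associated value transcendental extension of Section \ref{Section asso value tr extn} (which alters neither $j$ nor the minimal pair), [\ref{Dutta min fields implicit const fields}, Remark 3.3] expresses $wK(X)$ through $vK(a)$, $j(Q)$ and $\g$, while the value-group part of the distinguished-pair formulas for $(b,a)$ yields the same expression for $vK(b)$; comparing the two gives $wf\in vK(b)$, hence $wK(X)=vK(b)$.

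The main obstacle is the inequality $\d(b,K)\leq\g$: the elementary bound of Lemma \ref{Lemma deg f/ deg Q = j(f)/j(Q)} only gives $j(h)<j(f)$ for irreducible $h$ of degree $<\deg f$, which is not enough, so the dictionary between Mac Lane--Vaqui\'{e} key polynomials of $v_{b,\g}$ and abstract key polynomials of the $v_{b,\g_1}$ is what does the real work; once $(b,a)$ has been recognized as a distinguished pair, the value-group identity is a routine unwinding of known formulas.
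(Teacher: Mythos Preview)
Your argument has a circularity at the invocation of [\ref{Decaup Spiva Mahboub ABKP comparison MVKP}, Theorem 26]. You set $w_1 := v_{b,\g_1}$ for the \emph{arbitrary} root $b$ you chose and claim that, since $f$ is a Mac Lane--Vaqui\'{e} key polynomial for $w$, it is an abstract key polynomial for $w_1$. But the comparison theorem needs $w_1$ to be an augmentation of $w$ through $f$ (equivalently, $w$ must be the truncation of $w_1$ at $f$); concretely this requires $w_1 g = wg$ for all $g$ with $\deg g < \deg f$. Unwinding, $w_1 g = wg$ holds precisely when $v(b-z)\leq\g$ for every root $z$ of $g$ --- which is exactly the statement $\d(b,K)\leq\g$ you are trying to establish. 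So the step begs the question. In fact the lemma only asserts the \emph{existence} of a suitable root, and in the non-henselian setting not every root with $v(a-b)\geq\g$ will do: compare how the paper uses this very lemma in the proof of Proposition \ref{Prop d(K(a)|K) = d(K(b)|K)}, where henselianity is invoked precisely to transfer the conclusion from the root produced by the lemma to an arbitrarily given one.

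The paper avoids this by building the augmentation first and extracting the root afterward. It notes $\g\in v\overline{K}$ via [\ref{Dutta invariant of valn tr extns and connection with key pols}, Theorem 3.10(3)], forms $w' := [w;f,\b]$ for some $\b>wf$, extends $w'$ to $\overline{K}(X)$ compatibly with $v$, and then invokes [\ref{Dutta invariant of valn tr extns and connection with key pols}, Lemma 5.4] to obtain a root $b$ of $f$ occurring as the first coordinate of a \emph{minimal} pair $(b,\g^\prime)$ for $w'$. Minimality forces $j_{w'}(g)=0$ whenever $\deg g<\deg f$, and since $w'g = wg$ for such $g$ by the very definition of the augmentation, one gets $wg = vg(b)$ via Remark \ref{Remark gw = g(b)v}. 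The point is that the augmentation is defined intrinsically from $w$ and $f$; your route fixes $b$ first and then cannot certify that the resulting $v_{b,\g_1}$ is an augmentation of $w$ without assuming the conclusion.

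For the value-group identity the paper's argument is also cleaner than yours: instead of the $f$-expansion and the need for $wf\in vK(b)$, it uses the $Q$-expansion. Writing $h=\sum h_iQ^i$ with $\deg h_i<\deg Q$, one has $wh=\min_i w(h_iQ^i)$ and each $w(h_iQ^i)=wh_i+i\,wQ=vh_i(b)+i\,vQ(b)\in vK(b)$ by the first part. Your appeal to ``distinguished-pair formulas'' together with the associated value transcendental extension is imprecise; in particular, passing to $\tilde{w}$ alters the value group, so [\ref{Dutta min fields implicit const fields}, Remark 3.3] cannot be read back to $w$ in the way you suggest. (If you prefer the $f$-expansion route, note that a key polynomial $f$ with $\deg f>\deg Q$ satisfies $wf=r\,wQ$ with $r=\deg f/\deg Q$, so $wf\in vK(b)$ is immediate once $wQ=vQ(b)$ is known --- but this is the $Q$-expansion argument in disguise.)
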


\begin{proof}
	Since $\deg f > \deg Q$, it follows from [\ref{Dutta invariant of valn tr extns and connection with key pols}, Theorem 3.10(3)] that $\g\in v\overline{K}$. Take some $\b\in v\overline{K}$ such that $\b>wf$ and consider the augmentation $w^\prime:= [w; f, \b]$. Take a common extension of $w^\prime$ and $w|_{\overline{K}}$ to $\overline{K}(X)$ which we again denote by $w^\prime$. It then follows from [\ref{Dutta invariant of valn tr extns and connection with key pols}, Lemma 5.4] that there exists a root $b$ of $f$ such that $(b,\g^\prime)$ is a minimal pair of definition for $w^\prime$ over $K$. Hence $j_{w^\prime}(g)=0$ whenever $\deg g < \deg f$. 
	
	\pars Take any $g\in K[X]$ such that $\deg g < \deg f$. Then $w^\prime g = wg$ by definition. Again, $w^\prime g = vg(b)$ by Remark \ref{Remark gw = g(b)v}. We thus have the first assertion. In particular, $wQ = vQ(b)$. As a consequence, $vK(b)\subseteq wK(X)$. Now take any $h\in K[X]$ and write $h = \sum_{i=0}^{n} h_iQ^i$ where $\deg h_i < \deg Q$. Then $wh = \{\min w(h_iQ^i)\}$ by [\ref{Novacoski key poly and min pairs}, Theorem 1.1]. Since $w(h_i Q^i) = v(h_iQ^i(b))$ by our prior observations, we conclude that $wh \in vK(b)$.  
\end{proof}

\begin{Proposition}\label{Prop d(K(a)|K) = d(K(b)|K)}
	Let $(K,v)$ be henselian and $(K(X)|K,w)$ a valuation transcendental extension. Take an extension of $w$ to $\overline{K}(X)$, a minimal pair of definition $(a,\g)$ for $w$ over $K$ and a pair of definition $(b,\g)$. Take the minimal polynomials $Q$ and $f$ of $a$ and $b$ over $K$. Assume that $f$ is a key polynomial for $w$ over $K$. Then
	\[  (vK(b):vK(a))[K(b)v:K(a)v] = \dfrac{\deg f}{\deg Q}.  \]
\end{Proposition}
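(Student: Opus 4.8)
The plan is to translate the claim into a statement about $j$-invariants, and then into one about value groups and residue fields. Since $f$ is a key polynomial, Lemma~\ref{Lemma deg f/ deg Q = j(f)/j(Q)} gives $\deg f/\deg Q = j(f)/j(Q)$, so it suffices to prove
\[ (vK(b):vK(a))\,[K(b)v:K(a)v] = \frac{j(f)}{j(Q)} . \]
By [\ref{Dutta invariant of valn tr extns and connection with key pols}, Theorem 3.10(3)] there are two cases: either $\deg f=\deg Q$, or $\deg f>\deg Q$ and $\g\in v\overline K$.

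In the first case $j(f)=j(Q)$ and $(b,\g)$ is itself a minimal pair of definition for $w$ over $K$; here I would check directly that $vK(b)=vK(a)$ and $K(b)v=K(a)v$ --- using that $v(a-b)\ge\g$ together with $\deg f=\deg Q$ forces $K(a)$ to embed into $K(b)$ as a valued field --- so that both sides equal $1$. In the main case $\deg f>\deg Q$, so $\g\in v\overline K$, I would apply Lemma~\ref{Lemma wK(X) = vK(b)}: after replacing $b$ by a suitable root of $f$, which changes neither $vK(b)$ nor $K(b)v$ since $K$ is henselian and conjugate roots give $K$-isomorphic valued extensions, we may assume $wg=vg(b)$ for all $g\in K[X]$ with $\deg g<\deg f$ and $wK(X)=vK(b)$. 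Since $(a,\g)$ is minimal, $j(g)=0$ whenever $\deg g<\deg Q$, so there $wg=vg(a)$, and $gw=g(a)v$ when $wg=0$ by Remark~\ref{Remark gw = g(b)v}; comparing with the analogous identities for $b$ shows $vK(a)\subseteq vK(b)$ and $K(a)v\subseteq K(b)v$, so the left-hand side is meaningful. I would then combine $wK(X)=vK(b)$ with the structural description of $wK(X)$ and $K(X)w$ in terms of the minimal pair $(a,\g)$ and the integer $j(Q)$ (from [\ref{Dutta min fields implicit const fields}], as recalled in the proof of Corollary~\ref{Coro min pair henselization}), together with the defectlessness of $(K(a,X)|K(X),w)$ from Corollary~\ref{Coro K(a,X)|K(X) defectless}, to evaluate $(vK(b):vK(a))$ and $[K(b)v:K(a)v]$ and verify that their product equals $j(f)/j(Q)$.

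The delicate point is this final evaluation: $wK(X)$ and $K(X)w$ each contain a transcendental direction --- the coset of $j(Q)\g$ in the value group, respectively a transcendental residue --- that must be cancelled correctly, and tracking the torsion order of $\g$ along $vK\subseteq vK(a)\subseteq vK(b)=wK(X)$ is where care is needed. A useful reformulation to steer the argument is that, over the henselian field $K$, the displayed identity is by the Lemma of Ostrowski equivalent to $d(K(a)|K)=d(K(b)|K)$; and this would follow at once from showing that $K(a)$ and $K(b)$ are each defectless over the implicit constant field $IC_K$ --- note $[K(a):IC_K]=j(Q)$ and $[K(b):IC_K]=j(f)$ by Proposition~\ref{Prop j(f) = j(f^h) = [K(b)^h:IC_K]} --- so that both defects equal $d(IC_K|K)$.
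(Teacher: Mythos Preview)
Your setup matches the paper's: the case split via [\ref{Dutta invariant of valn tr extns and connection with key pols}, Theorem 3.10(3)], the use of Lemma~\ref{Lemma wK(X) = vK(b)} together with henselianity to arrange $wg = vg(b)$ for $\deg g < \deg f$, and the identification $(vK(b):vK(a)) = (wK(X):vK(a)) = e$ (the order of $wQ$ modulo $vK(a)$) are exactly how the paper begins. The gap is the residue-field half, $[K(b)v:K(a)v]$, where both of your suggested routes stall.

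For your Route A, the structural description gives $K(X)w = K(a)v(gQ^e w)$ with $gQ^e w$ \emph{transcendental} over $K(a)v$; since $K(b)v$ is algebraic over $K(a)v$, it does not embed into $K(X)w$, so there is no direct way to read off $[K(b)v:K(a)v]$ from $K(X)w$. The defectlessness of $(K(a,X)|K(X),w)$ from Corollary~\ref{Coro K(a,X)|K(X) defectless} tells you about $wK(a,X)$ and $K(a,X)w$, not about $vK(b)$ or $K(b)v$. For your Route B, the claim that $K(b)|IC_K$ is defectless is precisely what is at stake: once the proposition is proved one gets $d(K(b)|IC_K)=d(K(a)|IC_K)$, but neither is known to equal $1$ a priori, and the paper establishes defectlessness of $(K(b,X)|K(X),w)$ in Theorem~\ref{Thm K(b,X)|K(b) is defectless} only \emph{after} and \emph{using} this proposition, so invoking that circle of ideas here is circular.

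What the paper actually does for the residue degree is a hands-on computation: writing $f = \sum_{i\in S} f_i Q^i$ with $\deg f_i<\deg Q$, one shows (using that $f$ is a key polynomial) that $wf=wf_0=w(f_iQ^i)$ for all $i\in S$, hence $e\mid i$ for each such $i$; scaling by $f'$ with $wf'=-wf_0$ produces from $f'f$ a polynomial $\chi$ in the variable $gQ^e$ with coefficients having residues in $K(a)v$, of degree $n/e$ where $n=\deg f/\deg Q$. Evaluating at $b$ and taking residues shows $\zeta := (gQ^e)(b)v$ is a root of $\chi$; irreducibility of $\chi$ over $K(a)v$ comes from Mac Lane's lemma applied to $\init_w(f'f)$, and a second expansion argument shows every element of $K(b)v$ lies in $K(a)v(\zeta)$. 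This yields $[K(b)v:K(a)v]=n/e$, whence $e\cdot(n/e)=n=\deg f/\deg Q$. Your outline does not supply any substitute for this step.
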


\begin{Remark}
	With the assumptions of Proposition \ref{Prop d(K(a)|K) = d(K(b)|K)}, it has been observed in [\ref{Dutta imp const fields key pols valn alg extns}, Lemma 3.2] that $vK(a)\subseteq vK(b)$ and $K(a)v\subseteq K(b)v$. Hence the formulation of the statement of the proposition is justified. 
\end{Remark}

\begin{proof}
	If $\deg f = \deg Q$ then $(b,\g)$ is also a minimal pair of definition for $w$ over $K$. The assertion is now immediate in view of [\ref{APZ characterization of residual trans extns}, Theorem 2.1]. We thus assume that $\deg f > \deg Q$. By Lemma \ref{Lemma wK(X) = vK(b)} there exists a $K$-conjugate $b^\prime$ of $b$ such that $wg = vg(b^\prime)$ whenever $\deg g < \deg f$ and $wK(X) = vK(b^\prime)$. Since $(K,v)$ is henselian, we conclude that
	\[ wg = vg(b) \text{ whenever $g(X)\in K[X]$ with } \deg g < \deg f, \text{ and } wK(X) = vK(b).   \]
	Set $e$ to be the order of $wQ$ modulo $vK(a)$. Then 
	\begin{equation}\label{Eqn 13}
		e = (wK(X):vK(a)) = (vK(b):vK(a))
	\end{equation}
	by [\ref{APZ characterization of residual trans extns}, Theorem 2.1]. Take $g\in K[X]$ with $\deg g < \deg Q$ such that $w(gQ^e) = 0$. Write $f = \sum_{i\in S} f_iQ^i$ where $S\subset \NN$ is a finite indexing set and $f_i\in K[X]$ with $\deg f_i <\deg Q$. Since $f$ is a key polynomial for $w$ over $K$, in view of [\ref{Dutta invariant of valn tr extns and connection with key pols}, Remark 3.8] we can assume that $\{0\}\subsetneq S$ and if $\{n\} = \max S$, then $f_n = 1$ and $n = \deg f / \deg Q$. Moreover, $wf = wf_0 = w(f_i Q^i)$ for all $i\in S$. Employing Remark \ref{Remark gw = g(b)v} we observe that $vf_0(a) = w(f_i Q^i)$ for all $i\in S$. As a consequence, $e$ divides $i$ for all $i\in S$. Write
	\[ i = e m_i \text{ for all } i\in S.  \]
	We can thus express
	\[  f = \sum_{i\in S}  \dfrac{f_i}{g^{m_i}} (gQ^e)^{m_i}.   \]
	Take $f^\prime\in K[X]$ with $\deg f^\prime<\deg Q$ such that $wf^\prime = vf^\prime(a) = -vf_0 (a) = -wf_0$. Modify the above expression as
	\begin{equation}\label{Eqn 14}
		f^\prime f = \sum_{i\in S}  \dfrac{f^\prime f_i}{g^{m_i}} (gQ^e)^{m_i}.
	\end{equation} 
	Observe that $w(f^\prime f_i / g^{m_i}) = 0$ for all $i\in S$. Taking residues, we then obtain that
	\begin{equation}\label{Eqn 15}
		f^\prime fw  = \sum_{i\in S}  \dfrac{f^\prime f_i}{g^{m_i}}w (gQ^ew)^{m_i}.
	\end{equation}
	We observe from Remark \ref{Remark gw = g(b)v} that $(f^\prime f_i / g^{m_i})w = (f^\prime(a) f_i(a) / g^{m_i}(a))v\in K(a)v$. Equation (\ref{Eqn 15}) then yields that $f^\prime f w = \chi(gQ^e w)$ where $\chi$ is a polynomial of degree $m_n$ over $K(a)v$.  
	
	\pars Plugging in $b$ in (\ref{Eqn 14}), taking residues and employing Remark \ref{Remark gw = g(b)v}, we obtain that
	\begin{equation}\label{Eqn 16}
		0 = \chi(\zeta), \text{ where } \zeta:= (gQ^e)(b)v. 
	\end{equation}
	Observe that $gQ^ew$ is transcendental over $K(a)v$ by [\ref{APZ characterization of residual trans extns}, Theorem 2.1]. Moreover, $f^\prime f w$ is irreducible over $K(a)v$ by [\ref{Mac Lane key pols}, Lemma 11.2]. Thus $\chi$ is an irreducible polynomial of degree $m_n$ over $K(a)v$. Since $\zeta\in K(b)v$, it now follows from (\ref{Eqn 16}) that 
	\begin{equation}\label{Eqn 17}
		[K(b)v:K(a)v] \geq [K(a)v(\zeta): K(a)v] = m_n.
	\end{equation}
    
    \pars Now take any element $\a\in K(b)v$. Then $\a = h(b)v$ for some $h\in K[X]$ with $\deg h < \deg f$ and $wh=vh(b) = 0$. Write $h = \sum_{i=0}^{d} h_i Q^i$ where $\deg h_i < \deg Q$. Observe that $0 = wh = \min\{w(h_i Q^i)\}$ by [\ref{Novacoski key poly and min pairs}, Theorem 1.1]. Set $S^\prime := \{ i\mid w(h_i Q^i) = 0  \}$. For $i\notin S^\prime$, we then have that $v(h_i Q^i)(b) = w(h_i Q^i) > 0$. Moreover, for $i\in S^\prime$, the fact that $w(h_i Q^i) = 0 \in vK(a)$ implies that $e$ divides $i$. We write 
    \[  i = e t_i \text{ for all } i\in S^\prime.  \]
    It follows that
    \[ h(b)v = \sum_{i\in S^\prime} (h_i Q^i)(b)v = \sum_{i\in S^\prime} \dfrac{h_i}{g^{t_i}}(b) v \zeta^{t_i}.  \]
    By Remark \ref{Remark gw = g(b)v}, we have that $(h_i/g^{t_i})(b)v \in K(a)v$. We have thus shown that $K(b)v \subseteq K(a)v(\zeta)$. From (\ref{Eqn 17}) we conclude that 
    \begin{equation}\label{Eqn 18}
    	[K(b)v:K(a)v] = m_n.
    \end{equation}
    The assertion now follows from (\ref{Eqn 13}), (\ref{Eqn 18}) and the observation that $em_n = n = \deg f / \deg Q$.
\end{proof}

\begin{Remark}\label{Remark defect notion}
	For a unibranched extension $(K(z)|K,v)$ where $z\in \overline{K}$, the value
	\[ d(K(z)|K,v) := \dfrac{[K(z):K]}{(vK(z):vK)[K(z)v:Kv]}  \]
	is referred to as the defect of the extension $(K(z)|K,v)$. The conclusion of Proposition \ref{Prop d(K(a)|K) = d(K(b)|K)} can then also be restated as 
	\[ d(K(a)|K,v) = d(K(b)|K,v).  \]
	Note that a unibranched extension $(K(z)|K,v)$ is defectless if and only if $d(K(z)|K,v)=1$. The Lemma of Ostrowski states that $(K(z)|K,v)$ is always defectless whenever $\ch Kv=0$. On the other hand if $\ch Kv = p>0$ then $d(K(z)|K,v) = p^n$ for some $n\in\NN$. 
\end{Remark}

\subsection{Proof of Theorem \ref{Thm K(b,X)|K(b) is defectless}}

\begin{proof}
	Take an extension of $w$ to $\overline{K(X)}$. In light of Proposition \ref{Prop j(f) = j(f^h)} we can assume that $(K,v)$ is henselian. Then
	\begin{equation}\label{Eqn 19}
		[K(b,X)^h:K(X)^h] = j(f)
	\end{equation}
by Corollary \ref{Coro j(f) = [K(b):IC_K] when K=K^h}. Take a minimal pair of definition $(a,\g)$ for $w$ over $K$ and take the minimal polynomial $Q(X)$ of $a$ over $K$. Write 
\[ Q(X) = (X-a_1)\dotsc(X-a_n),  \]
where we identify $a_1$ with $a$, and the roots are indexed such that $v(a-a_i)\geq \g$ for all $1\leq i\leq j$ and $v(a-a_i)<\g$ otherwise. Thus $j = j(Q)$. It follows that 
\[ wQ = j\g + \a, \]
where $\a = \sum_{i=j+1}^{n} v(a-a_i) \in vK(a)$ by [\ref{APZ characterization of residual trans extns}, Theorem 2.1]. Further, we observe from [\ref{Dutta imp const fields key pols valn alg extns}, Lemma 3.2] that
\[ vK(a)\subseteq vK(b) \text{ and } K(a)v\subseteq K(b)v. \]

\pars We first assume that $\g\in v\overline{K}$. Set $E$ to be the order of $\g$ modulo $vK(b)$ and $e$ to be the order of $wQ$ modulo $vK(a)$. Then $e$ is also the order of $j\g$ modulo $vK(a)$. It follows from [\ref{APZ characterization of residual trans extns}, Theorem 2.1] that $(wK(b,X): vK(b)) = E$ and $(wK(X): vK(a)) = e$. Consequently, 
\begin{equation}\label{Eqn 20}
	(wK(b,X): wK(X)) = \dfrac{E (vK(b): vK(a))}{e}.
\end{equation}
We further observe from [\ref{APZ characterization of residual trans extns}, Theorem 2.1] that 
\[ K(b,X)w = K(b)v (d(X-b)^Ew) \text{ and }  K(X)w = K(a)v (gQ^ew),    \]
where $d\in K(b)$ such that $vd = -E\g$ and $g(X)\in K[X]$ with $\deg g < \deg Q$ such that $wg = -ewQ$. The fact that $ewQ\in vK(a)\subseteq vK(b)$ implies that $ej\g\in vK(b)$. Hence $E$ divides $ej$. Set
\[ f:= \dfrac{ej}{E}.   \]
Then $vd^f = -ej\g$. Take $c\in K(b)$ such that $vc = -e\a$. Thus $vcd^f = wg$. It follows from [\ref{APZ2 minimal pairs}, Proposition 1.1] that $\dfrac{cd^f}{g} w$ is algebraic over $Kv$. Since $\dfrac{cd^f}{g}\in K(b,X)$, we conclude that $\dfrac{cd^f}{g} w \in K(b)v$. Thus
\[ K(b)v (gQ^e w) = K(b)v (cd^f Q^e w).   \] 
We thus have the chain of containments: 
\begin{equation}\label{Eqn 21}
	K(X)w = K(a)v (gQ^ew) \subseteq K(b)v (cd^f Q^e w) \subseteq K(b)v (d(X-a)^Ew) = K(b,X)w.
\end{equation}
Observe that $gQ^ew$ is transcendental over $K(a)v$ by [\ref{APZ characterization of residual trans extns}, Theorem 2.1]. Hence,
\begin{equation}\label{Eqn 22}
	[K(b)v (cd^f Q^e w): K(X)w] = [K(b)v (g Q^e w) : K(a)v (gQ^ew)] = [K(b)v: K(a)v].
\end{equation}
We now express 
\[ cd^f Q^e = c_0 + c_1 (X-b) + \dotsc + c_m (X-b)^m \text{ where } c_i \in K(b).   \]
The facts that $w(cd^fQ^e) = 0$ and $(b,\g)$ is a pair of definition for $w$ imply that $w(c_i (X-b)^i) \geq 0$ for all $i$. The minimality of $E$ then implies that 
\[ w(c_i (X-b)^i) > 0 \text{ whenever } E \text{ does not divide }i. \]
As a consequence, 
\begin{align*}
	cd^fQ^e w &= c_0 v + c_E (X-b)^Ew + \dotsc + c_{mE}(X-b)^{mE}w\\
	&= c_0 v + (\dfrac{c_E}{d}v) d(X-b)^E w + \dotsc + (\dfrac{c_{mE}}{d^m}v) (d(X-b)^Ew)^m \in K(b)v [d(X-b)^E w].
\end{align*}
We have now expressed $cd^fQ^e w$ as a polynomial $\chi$ in the variable $d(X-b)^E w$ over $K(b)v$. Thus, 
\begin{equation}\label{Eqn 23}
	[K(b,X)w: K(b)v (cd^fQ^e w)] = \deg \chi.
\end{equation}
Observe that $c_{fE} = c_{ej}$ is the coefficient of $(X-b)^{ej}$ in $cd^f Q^e$. Thus 
\[ c_{ej} = cd^f(-1)^{ne-je} \mathcal{E}_{ne-je} (a_1-b, \dotsc , a_n -b),   \]
where each $a_i-b$ appears $e$ times and $\mathcal{E}_k$ is the $k$-th elementary symmetric polynomial. There is a unique contributing factor of the smallest value in $\mathcal{E}_{ne-je}(a_1-b, \dotsc, a_n-b)$, namely $(a_{j+1}-b)^e \dotsc (a_n-b)^e$. It follows from the triangle inequality that
\[ vc_{fE} = vc_{ej} = v(cd^f) + v((a_{j+1}-b)^e \dotsc (a_n-b)^e) = -ewQ + e\a = -ej\g = -fE\g = vd^f.  \]
Thus $\dfrac{c_{fE}}{d^f}v \neq 0$. As a consequence, 
\begin{equation}\label{Eqn 24}
	\deg \chi \geq f.
\end{equation}
We now take some $i>f$. Then $ne - iE < ne - je$. Observe that
\[ c_{iE} = cd^f (-1)^{ne - iE} \mathcal{E}_{ne-iE}(a_1 - b, \dotsc , a_n-b).  \] 
Take any contributing factor $cd^f (a_{t_1} - b) \dotsc (a_{t_{ne-iE}}-b)$ of $c_{iE}$. The fact that $ne-iE < ne-je$ implies that
\[ v((a_{t_1} - b) \dotsc (a_{t_{ne-iE}}-b)) + (iE-ej)\g > v((a_{j+1}-b)^e \dotsc (a_n-b)^e).   \]
As a consequence, $vc_{iE} + (iE-ej)\g > vcd^f +v((a_{j+1}-b)^e \dotsc (a_n-b)^e)  = -ej\g$ and hence
\[ vc_{iE} > -iE\g = vd^i. \]
We have thus shown that
\[ \dfrac{c_{iE}}{d^i}v = 0 \text{ whenever } i>f.  \]
Hence $\deg\chi = f$ from (\ref{Eqn 24}). From (\ref{Eqn 21}), (\ref{Eqn 22}) and (\ref{Eqn 23}) we conclude that
\[ [K(b,X)w: K(x)w] = f[K(b)v:K(a)v].   \]
Combining with (\ref{Eqn 20}) we obtain that
\begin{equation}\label{Eqn 25}
	(wK(b,X):wK(X))[K(b,X)w:K(X)w] = j (vK(b): vK(a))[K(b)v:K(a)v]. 
\end{equation}

\pars We now assume that $\g\notin v\overline{K}$. It follows from [\ref{Dutta min fields implicit const fields}, Remark 3.3] that $wK(b,X) = vK(b)\dirsum\ZZ\g$ and $wK(X) = vK(a) \dirsum \ZZ j\g $. Hence $(wK(b,X) : wK(X)) = j(vK(b): vK(a))$. Moreover, $[K(b,X)w:K(X)w] = [K(b)v:K(a)v]$. We thus again obtain the following relations in this case: 
\begin{equation}\label{Eqn 26}
	(wK(b,X):wK(X))[K(b,X)w:K(X)w] = j (vK(b): vK(a))[K(b)v:K(a)v]. 
\end{equation}

\pars We first assume that $(K(b)|K,v)$ is defectless. By Theorem \ref{Thm cdc Khanduja-Aghigh} we can take a complete distinguished chain $b,b_1, \dotsc , b_n$ of $b$ over $K$. If $\g> \d(b,K)$ then $(b,\g)$ is a minimal pair of definition for $w$ over $K$. Else $v(b-b_1)\geq \g$ and hence $(b_1, \g)$ is also a pair of definition for $w$. Since $b_n\in K$, repeated application of this observation would yield some $b_i$ in this chain such that $(b_i, \g)$ is a minimal pair of definition for $w$ over $K$. We can thus set $a = b_i$ without any loss of generality. Since $a$ lies in the complete distinguished chain $b,b_1, \dotsc , b_n$, it follows from Theorem \ref{Thm cdc Khanduja-Aghigh} that $(K(a)|K,v)$ is defectless as well. Equations (\ref{Eqn 25}) and (\ref{Eqn 26}) can now be modified as 
\begin{equation}\label{Eqn 27}
	(wK(b,X):wK(X))[K(b,X)w:K(X)w] = j \dfrac{\deg f}{\deg Q} = j(f),  
\end{equation}
where the last equality follows from Lemma \ref{Lemma deg f/ deg Q = j(f)/j(Q)}. Equation (\ref{Eqn 27}) also holds in the case when $f$ is a key polynomial for $w$ over $K$ by Proposition \ref{Prop d(K(a)|K) = d(K(b)|K)}. The theorem now follows from Equations (\ref{Eqn 19}) and (\ref{Eqn 27}). 
\end{proof}

\pars The conclusion of Theorem \ref{Thm K(b,X)|K(b) is defectless} fails to hold if we remove the assumptions, as evidenced by the next example. 

\begin{Example}\label{Example K(b,X)|K(X) defect}
	Let $k$ be an imperfect field endowed with a non-trivial valuation $v$. Set $K$ to be the separable-algebraic closure of $k$ and take an extension of $v$ to $\overline{K}$. Then $(\overline{K}|K,v)$ is an immediate extension. Take $b\in \overline{K}\setminus K$. Then $(K(b)|K,v)$ is a non-trivial immediate unibranched extension and hence is a defect extension. Take the extension $w:= v_{0,0}$ and take an extension of $w$ to $\overline{K(X)}$. We can assume that $vb > 0$ without any loss of generality. Thus $(b, 0)$ is also a pair of definition. It follows from [\ref{APZ characterization of residual trans extns}, Theorem 2.1] that $wK(b,X) = vK(b) = vK = wK(X)$ and $K(b,X)w = K(b)v (Xw) = Kv (Xw) = K(X)w$. Thus $(K(b,X)^h|K(X)^h, w)$ is also immediate. Now henselization being a separable extension is linearly disjoint to a purely inseparable extension and hence,
	\[ [K(b,X)^h : K(X)^h] = [K(b):K] > 1.   \]
	It follows that $(K(b,X)|K(X),w)$ is a defect extension.   
\end{Example}

\pars This next example illustrates that the converse to both Proposition \ref{Prop d(K(a)|K) = d(K(b)|K)} and Theorem \ref{Thm K(b,X)|K(b) is defectless} fail to hold. We use notations as in Remark \ref{Remark defect notion}.

\begin{Example}\label{Example K(b,X)|K(X) defectless}
	Take an odd prime $p$ and denote by $(k,v)$ the valued field $\FF_p(t)$ equipped with the $t$-adic valuation. Fix an extension of $v$ to $\overline{k}$. Denote by $K$ the henselization of the perfect closure of $k$. Take a root $a$ of the Artin-Schreier polynomial $Q(X):= X^p-X-1/t \in K[X]$. Then $d(K(a)|K,v) = [K(a):K] = p$ (see [\ref{Kuh defect}, Example 3.9] for a proof). Take $\g\in\RR\setminus\QQ$ such that $0<\g<1/2$ and set $w:= v_{a,\g}$. Observe that every $K$-conjugate $a^\prime$ of $a$ is of the form $a+i$ for some $i\in\FF_p$ and hence $v(a-a^\prime) = 0$. Thus for any pair of definition $(z,\g)$ of $w$, we have that
	\begin{equation}\label{Eqn 28}
		K(a)\subseteq K(z)
	\end{equation} 
	 as a consequence of a variant of the Krasner's Lemma [\ref{Kuh value groups residue fields rational fn fields}, Lemma 2.21]. In particular, $(a,\g)$ is a minimal pair of definition for $w$ over $K$.
	 
	 \pars Set $b:= a+\sqrt{t}$. Then $v(a-b) = 1/2>\g$ and hence $(b,\g)$ is a pair of definition for $w$. Employing (\ref{Eqn 28}) we observe that $K(b) = K(a,\sqrt{t})$. It follows that $[K(b):K] = 2p$. Since defect is multiplicative and is always a power of $p$, we conclude that
	 \[ d(K(a)|K,v) = d(K(b)|K,v) = p.   \]
	 Take the minimal polynomial $f$ of $b$ over $K$. Observe that $j(Q) = 1$ by construction. Then $j(f) = 2$ by Lemma \ref{Lemma deg f/ deg Q = j(f)/j(Q)}. Consequently, $[K(b,X)^h: K(X)^h] = 2$ by Corollary \ref{Coro j(f) = [K(b):IC_K] when K=K^h}. It follows that
	 \[ (K(b,X)|K(X),w) \text{ is defectless}.   \]
	 Finally, the fact that $\g\notin\QQ$ implies that $\g$ is torsion-free modulo $vK$. Since $\deg f > \deg Q$, it now follows from [\ref{Dutta invariant of valn tr extns and connection with key pols}, Theorem 3.10(3)] that $f$ is not a key polynomial of $w$ over $K$.
\end{Example}


\begin{thebibliography}{1000000000}
	\bibitem{KASKK} K. Aghigh and S. K. Khanduja, On chains associated with elements algebraic over a henselian valued field, Algebra Colloq., 12(4) (2005), 607--616. \label{KA SKK chains associated with elts henselian}	
	\bibitem{AP88} V. Alexandru and A. Zaharescu, Sur une classe de prolongements \`{a} $K(x)$ d'une valuation sur une corp $K$, Rev. Roumaine Math. Pures Appl., 5 (1988), 393--400. \label{AP sur une classe}  	
	\bibitem{APZ1} V. Alexandru, N. Popescu and A. Zaharescu, A theorem of characterization of residual transcendental extensions of a valuation, J. Math. Kyoto University, 28 (1988), 579--592. \label{APZ characterization of residual trans extns}
	\bibitem{APZ2} V. Alexandru, N. Popescu and A. Zaharescu, Minimal pairs of definition of a residual transcendental extension of a valuation, J. Math. Kyoto University, 30 (1990), 207--225. \label{APZ2 minimal pairs}
	\bibitem{ABFVK17} A. Blaszczok and F.-V. Kuhlmann, On maximal immediate extensions of valued fields, Mathematische Nachrichten, 290 (2017), 7--18. \label{Kuh max imm extns of valued fields}
	\bibitem{DMS 18} J. Decaup, W. Mahboub and M. Spivakovsky, Abstract key polynomials and comparison theorems with the key polynomials of Mac Lane--Vaqui\'{e}, Illinois Journal of Mathematics 62(1--4), 253--270 (2018). \label{Decaup Spiva Mahboub ABKP comparison MVKP} 
    \bibitem{Du24} A. Dutta, An invariant of valuation transcendental extensions and its connection with key polynomials, Journal of Algebra, 649 (2024), 133--168. \label{Dutta invariant of valn tr extns and connection with key pols}
	\bibitem{Du21} A. Dutta, Minimal pairs, minimal fields and implicit constant fields, Journal of Algebra, 588 (2021), 479--514. \label{Dutta min fields implicit const fields}
	\bibitem{Du21} A. Dutta, Minimal pairs, inertia degrees, ramification degrees and implicit constant fields, Communications in Algebra, 50(11) (2022), 4964--4974. \label{Dutta min pairs inertia ram deg impl const fields}
	\bibitem{Du21} A. Dutta, On the implicit constant fields and key polynomials for valuation algebraic extensions, Journal of Commutative Algebra, 14(4) (2022), 515--525. \label{Dutta imp const fields key pols valn alg extns}
	\bibitem{K2} F.-V. Kuhlmann, The defect, in Commutative Algebra - Noetherian and non-Noetherian perspectives. Marco Fontana, Salah-Eddine Kabbaj, Bruce Olberding and Irena Swanson eds. Springer, 2011. \label{Kuh defect}	
	\bibitem{K1} F.-V. Kuhlmann,   Valuation theoretic and model theoretic aspects of local uniformization, in Resolution of Singularities -
	A Research Textbook in Tribute to Oscar Zariski, H. Hauser, J. Lipman, F. Oort, A. Quiros (es.), Progress in Math. 181, Birkh\"auser (2000), 4559-4600. \label{Kuh vln model}
	\bibitem{FVK-04} F.-V. Kuhlmann, Value groups, residue fields and bad places of rational function fields, Trans. Amer. Math. Soc., 356 (2004), 4559--4600. \label{Kuh value groups residue fields rational fn fields}
	\bibitem{K8} F.-V. Kuhlmann, Elimination of Ramification I: The Generalized Stability Theorem, Trans. Amer. Math. Soc., 362(11) (2010), 5697--5727. \label{Kuh gen stab thm} 
	\bibitem{SM36} S. Mac Lane, A construction for absolute values in polynomial rings, Trans. Amer. Math. Soc., 40 (1936), 363--395. \label{Mac Lane key pols}
	\bibitem{SM36} S. Mac Lane, A construction for prime ideals as absolute values of an algebraic field, Duke Math. J. (1936), 492--510. \label{Mac Lane key pols prime ideals}
    \bibitem{EN-JN-23} E. Nart and J. Novacoski, The defect formula, Advances in Mathematics, 428 (2023), DOI: \url{https://doi.org/10.1016/j.aim.2023.109153} \label{Nart Novacoski defect formula}
	\bibitem{JN-19} J. Novacoski, Key polynomials and minimal pairs, Journal of Algebra, 523 (2019), 1--14. \label{Novacoski key poly and min pairs}
	\bibitem{JN-MS-18}	J. Novacoski and M. Spivakovsky, Key polynomials and pseudo-convergent sequences, Journal of Algebra, 495 (2018), 199--219. \label{Nova Spiva key pol pseudo convergent}
	\bibitem{MV07} M. Vaqui\'{e}, Extension d'une valuation, Trans. Amer. Math. Soc., 359 no. 7 (2007), 3439--3481. \label{Vaquie key pols} 
\end{thebibliography}
\end{document}